\newtheorem{theorem}{Theorem} 
\newtheorem{lemma}{Lemma} 
\newtheorem{assumption}{Assumption}
\theoremstyle{definition}
\newtheorem{remark}{Remark}  
\newcommand{\E}{\mathbb{E}}
\newcommand{\R}{\mathbb{R}}
\renewcommand{\P}{\mathbb{P}}
\newcommand{\bs}{\boldsymbol}
\newcommand{\Z}{\mathbb{Z}}
\renewcommand{\baselinestretch}{1.2}
\begin{document}
\title{A network Poisson model for weighted directed networks  with covariates}
\author{
Meng Xu\thanks{Department of Statistics, Central China Normal University, Wuhan, 430079, China.
\texttt{Emails:}mengxu@mails.ccnu.edu.cn.}
\hspace{20mm}
Qiuping Wang\thanks{Corresponding Author:Department of Statistics, Central China Normal University, Wuhan, 430079, China.
\texttt{Emails:}qp.wang@mails.ccnu.edu.cn.}
\\
Central China Normal University\\
}
\date{}

\maketitle
\begin{abstract}

The edges in networks are not only binary, either present or absent, but also take weighted values in many scenarios
(e.g., the number of emails between two users). The covariate-$p_0$ model has been proposed to model
binary directed networks with the degree heterogeneity and covariates.
However, it may cause information loss when it is applied in weighted networks.
In this paper, we propose to use the Poisson distribution to model weighted directed networks, which
admits the sparsity of networks, the degree heterogeneity and the homophily caused by covariates of nodes.
We call it the \emph{network Poisson model}.
The model contains a density parameter $\mu$, a $2n$-dimensional node parameter $\bs{\theta}$ and a fixed dimensional regression coefficient $\bs{\gamma}$ of covariates. Since the number of parameters increases with $n$, asymptotic theory is nonstandard.
When the number $n$ of nodes goes to infinity, we establish the $\ell_\infty$-errors for the maximum likelihood estimators (MLEs),
$\widehat{\bs{\theta}}$ and $\widehat{\bs{\gamma}}$, which
are $O_p( (\log n/n)^{1/2} )$ for $\widehat{\bs{\theta}}$ and $O_p( \log n/n)$ for $\widehat{\bs{\gamma}}$, up to
an additional factor.
We also obtain the asymptotic normality of the MLE.
Numerical studies and a data analysis demonstrate our theoretical findings.

\vskip 5 pt \noindent
\textbf{Key words}: Asymptotic normality; Consistency; Covariates; Maximum likelihood estimator; Weighted directed networks. \\

\end{abstract}
\vskip 5pt

\section{Introduction}

\subsection{Background}Many complex interactive behaviors can be conveniently represented as networks or
graphs, where nodes denotes entities depending on different contexts  and edges denote interactions between entities.
Examples include friendships between people in social networks, emails between users in email networks,
hyperlinks between internet webs in hyperlink networks,
citations between papers and authors in citation networks, following behaviors between blogs in social media such as Twitter,
chemical reactions between proteins in biological networks, to just name a few.
Many statistical methodologies have been developed to analyze network data;
see \cite{Goldenberg2010}, \cite{Fienberg2012}, \cite{Robins2007173}, \cite{ROBINS2007192},
\cite{Bhattacharyya-Bickel-2016} and \cite{KimSS2018} for reviews and references therein.
The monograph by \cite{Kolaczyk2009} provides a comprehensive introduction on statistical analysis of network data.

Networks could be undirected or directed, weighted or unweighted.
Most realistic networks exhibit three typical features including sparsity, degree heterogeneity and homophily.
Sparsity means that the density of networks is small, in which many nodes do not have direct connections.
The degree heterogeneity describes such phenomenon that degrees of nodes vary greatly. Some nodes may have many connections
while others may have relatively less connections. The homophily characterizes
the tendency that individuals with same attributes such as age and sex are easier to form connections.
For example, the directed friendship network in \cite{Lazega:2001}
displays a strong homophily effect as shown in \cite{Yan-Jiang-Fienberg-Leng2018}.

One of the most popular models to model  the degree heterogeneity in undirected networks
is the  $\beta$-model [\cite{Chatterjee:Diaconis:Sly:2011}] that assigns one degree parameter to each node.
It is an undirected version of the well-known $p_1$ model in \cite{Holland:Leinhardt:1981}.
Asymptotic theory is nonstandard because of an increasing dimension of parameter space.
Exploring theoretical properties in the $\beta$-model and its variants has received wide attentions in recent years
[\cite{Chatterjee:Diaconis:Sly:2011, Perry:Wolfe:2012, Hillar:Wibisono:2013, Yan:Xu:2013, Rinaldo2013,Yan:Qin:Wang:2015,Graham2017,yan2018a,Chen:2020}].
In particular, \cite{Chatterjee:Diaconis:Sly:2011} proved the uniform consistency of the maximum likelihood estimator (MLE); \cite{Yan:Xu:2013} derived its asymptotic normality by approximating the inverse of the Fisher information matrix.
In the directed case, \cite{Yan:Leng:Zhu:2016a} proved the consistency and asymptotic normality of the MLE in the $p_0$ model
that is an exponential random graph model with out-degree and in-degree sequences as sufficient statistics. In the framework of
non-exponential random graph model, \cite{Wang:Wang:Luo2020} proposed Probit model to model the degree heterogeneity of the directed networks and proved the consistency and asymptotic normality of the moment estimator. Besides, \cite{Wang:Zhao:Luo2021} use the probit distribution to model the degree heterogeneity of the affiliation networks and established the uniform consistency and the asymptotic normality of the moment estimator.

\cite{Yan-Jiang-Fienberg-Leng2018} proposed the covariate-$p_0$ model to model the aforementioned three network features
in unweighted directed networks. They established the consistency and asymptotic normality of the restricted MLE
by using the restricted maximum likelihood method because of the challenge of exploring asymptotic theory [\cite{Fienberg2012,Graham2017}].
\cite{wang2021explicit} further incorporated a sparsity parameter to the covariate-$p_0$ model to allow the sparsity
and developed the unrestricted maximum likelihood theory including the consistency and asymptotic normality of the MLE.
However, the covariate-$p_0$ model is only designed to unweighted directed networks. When we apply it to weighted networks,
we need to neglect the weight information (i.e., treating all positive weight value as ``1" and others as ``0").
This may cause the information loss. As one example, all covariates are not significant when we apply the covariate-$p_0$ model
to fit the well-known Enran email network [\cite{Cohen2004}]; see Table \ref{Table:gamma:sparsep0}.
This motivates the present paper. We extend the covariate-$p_0$ model
to weighted networks by using the Poisson distribution to model weighted edges.
\subsection{The Model}
We now introduce our model.
Consider a weighted directed graph $G_{n}$ on $n$ ($n \geqslant 2$) nodes labeled by $1,\cdots, n$. Let $A=(a_{ij})_{n\times n}$ be the adjacency matrix $G_{n}$, where $a_{ij}\in \{0,1,\ldots\}$ is the weight of the directed edge from head node $i$ to tail node $j$. We do not consider self-loops here, i.e., $a_{ii}=0$. Let $d_{i}=\sum_{j=1,j\neq i}^{n}a_{ij}$ be the out-degree of vertex $i$ and $\bs{d}= (d_{1},\cdots,d_{n})$ be the out-degree sequence of the graph G. Similarly, define $b_{j}=\sum_{i=1,i\neq j}^{n}a_{ij}$ as the in-degree of vertex $j$ and $\bs{b}= (b_{1},\cdots,b_{n})$ as the
in-degree sequence. The pair $\{\bs{b},\bs{d}\}$ or $\{(b_{1}, d_{1}),\cdots , (b_{n}, d_{n})\}$ is the bi-degree sequence. We assume that all edges are independently distributed as Poisson random variables with the probability distributions:
\begin{equation}\label{model}
\P( a_{ij}= k) =\frac{e^{k(\mu+\alpha_{i}+\beta_{j}+Z_{ij}^\top\bs{\gamma})}}{k!}\exp(-e^{\mu+\alpha_{i}+\beta_{j}+Z_{ij}^\top\bs{\gamma}}),
1\le i \neq j \le n.
\end{equation}
The parameter $\mu$ quantifies the network sparsity.
$\alpha_{i}$ quantifies the effect of establishing outbound edges from sender $i$
while $\beta_{j}$ quantifies the effect of attracting inbound edges from receiver $j$.
The vector parameter $\bs{\gamma}$ is a $p$-dimensional
 regression coefficient for the covariate $Z_{ij}$.
 We will call the above model the \emph{network Poisson model} hereafter.

The covariate $Z_{ij}$ is either a vector associated with edges or a function of node-specific covariates.
If $X_i$ denotes  a $p$-dimensional  vector of node-level attributes, then these node-level attributes can be used to construct a vector $Z_{ij}=g(X_i, X_j)$, where
$g(\cdot, \cdot)$ is a function of its arguments. As one example, if we let $g(X_i, X_j)$ being equal to $\|X_i - X_j\|_1$, then it measures the similarity between nodes $i$ and $j$.

Motivated by techniques for the analysis of the unrestricted likelihood inference in the covariate-$p_0$ model for directed graphs in \cite{wang2021explicit},
we generalize their approaches to weighted directed graphs here.
When the number of nodes $n$ goes to infinity, we derive the $\ell_\infty$-error between the MLE
$(\widehat{\bs{\theta}}, \widehat{\bs{\gamma}})$ and its true value $(\bs{\eta}, \bs{\gamma})$.
This is done by using a two-stage Newton process
that first finds the error bound between $\widehat{\bs{\theta}}_\gamma$ and $\bs{\theta}$
with a fixed $\bs{\gamma}$ and then derives the error bound between
$\widehat{\bs{\gamma}}$ and $\bs{\gamma}$.
They are $O_p( (\log n/n)^{1/2} )$ for $\widehat{\bs{\eta}}$ and $O_p( \log n/n)$ for $\widehat{\bs{\gamma}}$, up to
an additional factor on parameters.
Further, we derive the asymptotic normality of the MLE. The asymptotic distribution of $\widehat{\bs{\gamma}}$
has a bias term while $\widehat{\bs{\theta}}$ does not have such a bias,
which collaborates the findings in \cite{Yan-Jiang-Fienberg-Leng2018}. This is because of
different convergence rates for $\widehat{\bs{\gamma}}$ and $\widehat{\bs{\theta}}$.
Wide simulations are carried out to demonstrate our theoretical findings.
In our simulations, this bias is very small, even could be neglected, which is different from
the significant bias effect in \cite{Yan-Jiang-Fienberg-Leng2018} and \cite{wang2021explicit}.
This may be due to that the weighted values attenuate the bias.
The application to the Enran email data set illustrates the utility of the proposed model.

For the remainder of the paper, we proceed as follows.
In Section \ref{section:mle}, we give the maximum likelihood estimation.
In section \ref{section:main}, we present theoretical properties of the MLE.
Numerical studies are presented in Section \ref{section:simulation}.
We provide further discussion in Section \ref{section:summary}.
The proofs of theorems are relegated to the Appendix.
All supported lemmas and detailed calculations are in the Supplementary Material.

\section{Maximum Likelihood Estimation}
\label{section:mle}

Let $\bs{\alpha}=(\alpha_{1},\ldots,\alpha_{n})^\top$ and $\bs{\beta}=(\beta_{1},\ldots,\beta_{n})^\top$.
If one transforms $(\mu,\bs{\alpha}, \bs{\beta})$ to $(\mu+2c_{1}, \bs{\alpha}-c_1+c_{2}, \bs{\beta}-c_{1}-c_{2})$,
then the probability in \eqref{model} does not change.
For the identifiability of the model, several possible restriction conditions immediately appear in our mind,
including   $\alpha_n=0$, $\beta_n=0$, or $\mu=0$, $\alpha_n=0$, or $\mu=0$, $\beta_n=0$.
When we $\alpha_n=0$, $\beta_n=0$, it will keep the density parameter $\mu$.

The logarithm of the likelihood function is
\begin{align}\label{eq:likelihood function}
\ell(\mu,\bs{\alpha},\bs{\beta
},\bs{\gamma}) = \sum_{i\neq j}(a_{ij}(\mu+\alpha_{i}+\beta_{j}+Z_{ij}^\top\bs{\gamma})-
e^{\mu+\alpha_{i}+\beta_{j}+Z_{ij}^\top\bs{\gamma}}-\log (a_{ij}!))
\end{align}
The notation $\sum_{i,j=1,i\neq j}^{n}$ is a shorthand for $\sum_{i=1}^{n}\sum_{j=1,j\neq i}^{n}$. The score equations for the vector parameters $\mu, \bs{\alpha}, \bs{\beta}$, $\bs{\gamma}$ are easily seen as
\begin{equation}\label{eq:likelihood}
\begin{array}{rlll}
\sum\limits_{i,j=1,i\neq j}^{n} a_{ij} &= &\sum\limits_{i\neq j} e^{\mu+\alpha_{i}+\beta_{j}+Z_{ij}^\top\bs{\gamma}},& \\
\sum\limits_{i,j=1,i\neq j}^{n} a_{ij}Z_{ij} &= &\sum\limits_{i\neq j} Z_{ij} e^{\mu+\alpha_{i}+\beta_{j}+Z_{ij}^\top\bs{\gamma}},& \\
d_i & =& \sum\limits_{j=1, j\neq i}^n e^{\mu+\alpha_{i}+\beta_{j}+Z_{ij}^\top\bs{\gamma}},&i=1,\ldots, n, \\
b_j & = &\sum\limits_{i=1,i\neq j}^n e^{\mu+\alpha_{i}+\beta_{j}+Z_{ij}^\top\bs{\gamma}},&j=1,\ldots, n.
\end{array}
\end{equation}
Under the restriction $\mu=0$ and $\beta_n=0$, the first equation and the last equation with $j=n$ in the above
system of equations will be excluded. Although there are a total of $2n+1+p$ equations,
the number of minimal equations is only $2n-1 + p$.

The MLE $(\widehat{\mu}, \widehat{\bs{\alpha}},
\widehat{\bs{\beta}}, \widehat{\bs{\gamma}})$ of the parameter vector
$(\mu, \bs{\alpha}, \bs{\beta}, \bs{\gamma})$ is the solution of the above equations if it exist.
The Newton-Raphson algorithm can be used to solve the above equations.
We can also simply use the function ``glm" in R language to calculate the solution.

\section{Theoretical Properties}
\label{section:main}

Let $\widetilde{\bs{\alpha}}=\mu/2+\bs{\alpha}$ and $\widetilde{\bs{\beta}}=\mu/2+\bs{\beta}$, then $\widetilde{\bs{\alpha}}+\widetilde{\bs{\beta}}=\mu+\bs{\alpha}+\bs{\beta}$.
With this reparameterized technique, we could set $\mu=0$ for convenience.
Further, we set $\beta_n=0$ jointly as the identification condition in this section.
The asymptotic properties of $\widehat{\mu}$ in the restriction $\alpha_n=\beta_n=0$ is the same as those of
$\widehat{\alpha}_n$ in the restriction $\mu=\beta_n=0$.

\emph{Notations}. Let $\R = (-\infty, \infty)$ be the real domain. For a subset $C\subset \R^n$, let $C^0$ and $\overline{C}$ denote the interior and closure of $C$, respectively.
For a vector $\mathbf{x}=(x_1, \ldots, x_n)^\top\in \R^n$, denote by $\|\mathbf{x}\|$ for a general norm on vectors with the special cases
$\|\mathbf{x}\|_\infty = \max_{1\le i\le n} |x_i|$ and $\|\mathbf{x}\|_1=\sum_i |x_i|$ for the $\ell_\infty$- and $\ell_1$-norm of $\mathbf{x}$ respectively.
When $n$ is fixed, all norms on vectors are equivalent. Let
$B(\mathbf{x}, \epsilon)=\{\mathbf{y}: \| \mathbf{x} - \mathbf{y} \|_\infty \le \epsilon\}$ be an $\epsilon$-neighborhood of $\mathbf{x}$.
For an $n\times n$ matrix $J=(J_{i,j})$, let $\|J\|_\infty$ denote the matrix norm induced by the $\ell_\infty$-norm on vectors in $\R^n$, i.e.,
\[
\|J\|_\infty = \max_{x\neq 0} \frac{ \|J\mathbf{x}\|_\infty }{\|\mathbf{x}\|_\infty}
=\max_{1\le i\le n}\sum_{j=1}^n |J_{i,j}|,
\]
and $\|J\|$ be a general matrix norm.
Define the matrix maximum norm: $\|J\|_{\max}=\max_{i,j}|J_{ij}|$.
The notation $\sum_i$ denotes the summarization over all $i=1, \ldots, n$ and
$\sum_{i\neq j}$  is a shorthand for $\sum_{i=1}^n \sum_{j=1, j\neq i}^n$.
The notation $f(x) \lesssim g(x)$ or $f(x)=O(g(x))$ means that there exists a constant $c$ such that
$|f(x)| \le c |g(x)|$.

For convenience of our theoretical analysis, define $\bs{\theta}=(\alpha_1, \ldots, \alpha_n, \beta_1, \ldots, \beta_{n-1})^\top$.
We use the superscript ``*" to denote the true parameter under which the data are generated.
When there is no ambiguity, we omit the super script ``*".
Further, define
\[
\pi_{ij}:=\alpha_i+\beta_j+Z_{ij}^\top \bs{\gamma}, ~~ \lambda( x ): = e^x.
\]
Write $\lambda^\prime$, $\lambda^{\prime\prime}$ and $\lambda^{\prime\prime\prime}$ as the first, second and third derivative of $\lambda(x)$ on $x$, respectively.
Direct calculations give that $\lambda^\prime(x)=\lambda^{\prime\prime}(x)=\lambda^{\prime\prime\prime}(x)=e^x$.
Let $\epsilon_{n1}$ and $\epsilon_{n2}$ be two small positive number. When $\bs{\theta}\in B(\bs{\theta}^{\ast},\epsilon_{n1})$, $\bs{\gamma}\in B(\bs{\gamma}^{\ast},\epsilon_{n2})$, we have
\begin{align*}
|\alpha_i+\beta_j+Z_{ij}^{T}\bs{\gamma}|\leqslant \max_{i\neq j}\left|\alpha_{i}^{\ast}+\beta_{j}^{\ast}\right|+2\epsilon_{n1}+pq\|\bs{\gamma}^{*}\|_{\infty}+pq\epsilon_{n2} :=\rho_n
\end{align*}
where $q :=\max_{i,j}\|Z_{ij}\|_{\infty}$, and we assume that $q$ is a constant.
It is not difficult to verify, When $\bs{\theta}\in B(\bs{\theta}^{\ast},\epsilon_{n1})$ and $\bs{\gamma}\in B(\bs{\gamma}^{\ast},\epsilon_{n2})$,
\begin{equation}\label{ineq-mu-third-derivative}
e^{-\rho_{n}}\leqslant \lambda(\pi_{ij}) =\lambda^\prime(\pi_{ij})=\lambda^{\prime\prime}(\pi_{ij})
=\lambda^{\prime\prime\prime}(\pi_{ij}) \leqslant e^{\rho_{n}}.
\end{equation}
When causing no confusion, we will simply write $\lambda_{ij}$ stead of $\lambda_{ij}(\bs{\theta}, \bs{\gamma})$ for shorthand,
where
\[
\lambda_{ij}(\bs{\theta}, \bs{\gamma}) =e^{\alpha_i + \beta_j + Z_{ij}^\top \bs{\gamma} }=\lambda(\pi_{ij}).
\]
Write the partial derivative of a function vector $F(\widehat{\bs{\theta}}, \bs{\gamma})$ on $\bs{\theta}$ as
\[
\frac{ \partial F(\widehat{\bs{\theta}}, \widehat{\bs{\gamma}}) }{ \partial \bs{\theta}^\top }
=\frac{ \partial F(\bs{\theta}, \bs{\gamma}) }{ \partial \bs{\theta}^\top }\bigg |_{\bs{\theta}=\widehat{\bs{\theta}},
\bs{\gamma}=\widehat{\bs{\gamma}}}.
\]

Throughout the remainder of this paper, we make the following assumption.
\begin{assumption}
Assume that $p$, the dimension of $Z_{ij}$, is fixed and that the support of $Z_{ij}$ is $\Z^p$, where $\Z$ is a compact subset of $\R$.
\end{assumption}

The above assumption is made in \cite{Graham2017} and \cite{Yan-Jiang-Fienberg-Leng2018}.
In many real applications, the attributes of nodes have a fixed dimension
and $Z_{ij}$ is bounded.
As one example, if nodal variables are indicator such as sex, then the assumption holds.
If $Z_{ij}$ is not bounded, we could make the transform $\tilde{Z}_{ij} = e^{Z_{ij}}/( 1 + e^{Z_{ij}})$.

\subsection{Consistency}\label{subsection:con}

In order to establish asymptotic properties of $(\widehat{\bs{\theta}}, \widehat{\bs{\gamma}})$,
 we define a system of functions
\begin{align*}
F_{i}( \bs{\theta},\bs{\gamma})&=\sum_{j=1,j\neq i}^{n} \lambda_{ij}(\bs{\theta},\bs{\gamma})-d_{i},~i=1,\ldots,n,\\
F_{n+j}( \bs{\theta},\bs{\gamma})&=\sum_{i=1,i\neq j}^{n}\lambda_{ij}(\bs{\theta},\bs{\gamma})-b_{j},~j=1,\ldots,n,\\
F( \bs{\theta},\bs{\gamma})&=(F_{1}(\bs{\theta},\bs{\gamma}),\ldots ,F_{n}(\bs{\theta},\bs{\gamma}),F_{n+1}(\bs{\theta},\bs{\gamma}),\ldots,F_{2n-1}(\bs{\theta},\bs{\gamma}))^\top,
\end{align*}
which are based on the score equations for $\widehat{\bs{\theta}}$.
Define $F_{\gamma, i}(\bs{\theta})$ be the value of $F_{i}(\bs{\theta}, \bs{\gamma})$ when $\bs{\gamma}$ is fixed.
Let $\widehat{\bs{\theta}}_\gamma$ be a solution to $F_\gamma( \bs{\theta})=0$ if it exists.
Correspondingly, we define two functions for exploring the asymptotic behaviors of the estimator of $\bs{\gamma}$:
\begin{eqnarray}
\label{definition-Q}
Q(\bs{\theta}, \bs{\gamma})= \sum_{i,j=1;i\neq j}^n Z_{ij} (   \lambda_{ij}(\bs{\theta}, \bs{\gamma}) - a_{ij}  ), \\
\label{definition-Qc}
Q_c(\bs{\gamma})= \sum_{i,j=1;i\neq j}^n Z_{ij} (    \lambda_{ij}(\widehat{\bs{\theta}}_\gamma, \bs{\gamma}) - a_{ij}  ).
\end{eqnarray}
$Q_c(\bs{\gamma})$ could be viewed as the concentrated or profile function of $Q(\bs{\theta}, \bs{\gamma})$ in which the degree parameter $\bs{\theta}$ is profiled out.
It is clear that
\begin{equation*}\label{equation:FQ}
F(\widehat{\bs{\theta}}, \widehat{\bs{\gamma}})=0,~~F_\gamma( \widehat{\bs{\theta}}_\gamma)=0,~~
Q(\widehat{\bs{\theta}}, \widehat{\bs{\gamma}})=0,~~Q_c( \widehat{\bs{\gamma}})=0.
\end{equation*}

We first present the error bound between $\widehat{\bs{\theta}}_{\gamma}$ with $\bs{\gamma} \in B(\bs{\gamma}^*, \epsilon_{n2})$ and $\bs{\theta}^*$. This is proved by
constructing the Newton iterative sequence $\{\bs{\theta}^{(k+1)}\}_{k=0}^\infty$ with  initial value $\bs{\theta}^*$
and obtaining a geometrically fast convergence rate of the iterative sequence,
where $\bs{\theta}^{(k+1)} = \bs{\theta}^{(k)} - [ F_\gamma'(\bs{\theta}^{(k)})]^{-1} F_\gamma(\bs{\theta}^{(k)})$ and
$F_\gamma'(\bs{\theta})=\partial F_\gamma(\bs{\theta})/\partial \bs{\gamma}^\top$.
Details are given in the Supplementary Material.
The error bound is stated below.

\begin{lemma}\label{lemma:alpha:beta-fixed}
If $\bs{\gamma} \in B( \bs{\gamma}^*, \epsilon_{n2} )$ and $e^{\rho_{n}}= o( (n/\log n)^{1/26})$,
then as $n$ goes to infinity,
with probability at least $1-O(1/n)$, $\widehat{\bs{\theta}}_\gamma$ exists
and satisfies
\[
 \|\widehat{\bs{\theta}}_\gamma - \bs{\theta}^* \|_\infty
 = O_p\left(  e^{7\rho_{n}} \sqrt{\frac{ \log n}{n} } \right)=o_p(1).
\]
Further, if $\widehat{\bs{\theta}}_\gamma$ exists, then it is unique.
\end{lemma}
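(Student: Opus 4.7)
\medskip

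\textbf{Proof proposal.} The strategy is the Newton-iteration / Kantorovich approach that is standard for the $\beta$-model family (as in Yan--Xu 2013 and Yan--Leng--Zhu 2016a): start the Newton sequence at the truth $\bs{\theta}^{*}$, show that the initial residual $F_{\gamma}(\bs{\theta}^{*})$ is small with high probability, combine this with a uniform bound on the inverse Jacobian and a Lipschitz bound on the Jacobian to obtain geometric convergence to a root $\widehat{\bs{\theta}}_{\gamma}$ that lies within the announced $\ell_{\infty}$-radius, and finally invoke strict convexity of the profile log-likelihood in $\bs{\theta}$ to get uniqueness.

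First I would analyze the Jacobian $V(\bs{\theta},\bs{\gamma})=\partial F_{\gamma}/\partial\bs{\theta}^{\top}$. Its entries are linear combinations of $\lambda_{ij}=e^{\alpha_{i}+\beta_{j}+Z_{ij}^{\top}\bs{\gamma}}$, so by \eqref{ineq-mu-third-derivative} every $\lambda_{ij}$ lies in $[e^{-\rho_{n}},e^{\rho_{n}}]$ on the ball $B(\bs{\theta}^{*},\epsilon_{n1})\times B(\bs{\gamma}^{*},\epsilon_{n2})$. Consequently $V$ has the same block structure as the Fisher information of the $p_{0}$-model: a diagonal-plus-rank-$2$-like pattern with diagonal entries of order $ne^{\rho_{n}}$ and off-diagonal entries of constant order (in $n$) times $e^{\rho_{n}}$. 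I would then invoke the approximate-inverse construction of Yan--Leng--Zhu (an explicit matrix $S$ with $\|V^{-1}-S\|_{\max}\lesssim e^{c\rho_{n}}/n^{2}$ and $\|S\|_{\infty}\lesssim e^{c\rho_{n}}/n$) to obtain, uniformly over the ball,
\[
\|V(\bs{\theta},\bs{\gamma})^{-1}\|_{\infty}\;\lesssim\;\frac{e^{c\rho_{n}}}{n}
\]
for some absolute constant $c$.

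Next I would bound the initial residual. Writing $F_{\gamma,i}(\bs{\theta}^{*})=\sum_{j\neq i}\bigl(\lambda_{ij}(\bs{\theta}^{*},\bs{\gamma})-a_{ij}\bigr)$ and adding and subtracting $\sum_{j}\lambda_{ij}(\bs{\theta}^{*},\bs{\gamma}^{*})$, I split it into a zero-mean Poisson sum and a Taylor-remainder term of order $\epsilon_{n2}\cdot ne^{\rho_{n}}\cdot q$. For the stochastic piece, I would apply Bernstein's inequality for sums of independent centred Poisson variables with variance proxy $ne^{\rho_{n}}$, together with a union bound over $2n-1$ coordinates, to get
\[
\bigl\|\,F(\bs{\theta}^{*},\bs{\gamma}^{*})\bigr\|_{\infty}\;=\;O_{p}\!\left(e^{\rho_{n}/2}\sqrt{n\log n}\right)
\]
with probability $1-O(1/n)$. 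Combined with the inverse-Jacobian bound, the first Newton step from $\bs{\theta}^{*}$ moves at most
\[
\|\bs{\theta}^{(1)}-\bs{\theta}^{*}\|_{\infty}\;\lesssim\;\frac{e^{c\rho_{n}}}{n}\cdot e^{\rho_{n}/2}\sqrt{n\log n}\;=\;e^{c'\rho_{n}}\sqrt{\tfrac{\log n}{n}},
\]
which is the target rate (up to choosing the constant $c'=7$ as the authors track through the Yan--Leng--Zhu estimates).

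The heart of the argument is then a Kantorovich-style contraction. Using $\lambda^{\prime\prime}=\lambda\le e^{\rho_{n}}$, the Jacobian $V$ is Lipschitz in $\bs{\theta}$ in the $\ell_{\infty}\to\ell_{\infty}$ operator norm with a bound of order $ne^{\rho_{n}}$. Combined with $\|V^{-1}\|_{\infty}\lesssim e^{c\rho_{n}}/n$, the Newton map has local Lipschitz constant of order $e^{c''\rho_{n}}\|\bs{\theta}-\bs{\theta}^{*}\|_{\infty}$. The assumption $e^{\rho_{n}}=o((n/\log n)^{1/26})$ is exactly what is needed so that this local Lipschitz constant is $o(1)$ throughout the ball of radius $e^{7\rho_{n}}\sqrt{\log n/n}$: $e^{c''\rho_{n}}\cdot e^{7\rho_{n}}\sqrt{\log n/n}\to 0$ when the exponent budget $26$ on $e^{\rho_{n}}$ is respected. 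I would then show by induction that the iterates remain in this ball, deduce geometric convergence, and identify the limit as a solution $\widehat{\bs{\theta}}_{\gamma}$ of $F_{\gamma}=0$. The main obstacle is bookkeeping the exponent of $e^{\rho_{n}}$ accumulated across (i) $\|V^{-1}\|_{\infty}$, (ii) the Taylor remainder from varying $\bs{\gamma}$, (iii) the Bernstein deviation, and (iv) the Lipschitz bound of $V$, so that they add up to the stated $7\rho_{n}$ and are compatible with the $1/26$ budget.

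Finally, uniqueness follows from the strict convexity of the profile negative log-likelihood in $\bs{\theta}$: its Hessian is $V(\bs{\theta},\bs{\gamma})$, whose quadratic form $x^{\top}Vx=\sum_{i\neq j}\lambda_{ij}(x_{i}+x_{n+j})^{2}$ is strictly positive on the identifying subspace $\beta_{n}=0$, since all $\lambda_{ij}>0$. Hence any root of $F_{\gamma}(\bs{\theta})=0$ is the unique global minimizer, proving that $\widehat{\bs{\theta}}_{\gamma}$ is unique whenever it exists.
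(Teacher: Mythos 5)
Your proposal matches the paper's own argument: the authors also construct the Newton iterative sequence $\bs{\theta}^{(k+1)} = \bs{\theta}^{(k)} - [F_\gamma'(\bs{\theta}^{(k)})]^{-1}F_\gamma(\bs{\theta}^{(k)})$ started at $\bs{\theta}^*$, verify the Kantorovich conditions (Lemma \ref{pro:Newton:Kantovorich}) using the Yan--Leng--Zhu approximate inverse $S$ for $\|V^{-1}\|_\infty$, a concentration bound on $\|F(\bs{\theta}^*,\bs{\gamma}^*)\|_\infty$ of order $e^{O(\rho_n)}(n\log n)^{1/2}$, and the Lipschitz continuity of the Jacobian, with uniqueness from positive definiteness of $F_\gamma'$ on the identified parameter space. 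The only differences are bookkeeping of the exponent of $e^{\rho_n}$, which you acknowledge and which does not affect the structure of the argument.
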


By the compound function derivation law, we have
\begin{eqnarray}\label{equ-derivation-a}
0=\frac{ \partial F_\gamma(\widehat{\bs{\theta}}_\gamma) }{\partial \bs{\gamma}^\top}
 = \frac{ \partial F(\widehat{\bs{\theta}}_\gamma, \gamma) }{\partial \bs{\theta}^\top}
  \frac{\partial \widehat{\bs{\theta}}_\gamma }{ \partial \bs{\gamma}^\top} + \frac{\partial F(\widehat{\bs{\theta}}_\gamma,  \bs{\gamma})}{\partial \bs{ \gamma}^\top},
  \\
  \label{equ-derivation-b}
\frac{ \partial Q_c({ \bs{\gamma}})}{ \partial { \bs{\gamma}}^\top} = \frac{\partial Q(\widehat{\bs{\theta}}_\gamma, { \bs{\gamma}})}{\partial \bs{\theta}^\top}
 \frac{\partial \widehat{\bs{\theta}}_\gamma }{{\partial \bs{\gamma}}^\top} + \frac{ \partial Q(\widehat{\bs{\theta}}_\gamma, { \bs{\gamma}}) }{ \partial { \bs{\gamma}}^\top}.
\end{eqnarray}
By solving $\partial\widehat{\bs{\theta}}_\gamma / \partial { \bs{\gamma}}^\top$ in \eqref{equ-derivation-a}
and substituting it into \eqref{equ-derivation-b}, we get
the Jacobian matrix
$Q_c^\prime( \bs{\gamma} )$ $(=\partial Q_c(\bs{\gamma})/\partial \bs{\gamma}^\top)$:
\begin{eqnarray}\label{equation:Qc-derivative}
\frac{ \partial Q_c(\bs{\gamma}) }{ \partial \bs{\gamma}^\top }  =
\frac{ \partial Q(\widehat{ \bs{\theta} }_\gamma, \bs{\gamma}) }{ \partial \bs{\gamma}^\top}
 - \frac{ \partial Q(\widehat{ \bs{\theta} }_\gamma, \bs{\gamma}) }{\partial \bs{\theta}^\top}
 \left[\frac{\partial F(\widehat{ \bs{\theta} }_\gamma,\bs{\gamma})}{\partial \bs{\theta}^\top}  \right]^{-1}
\frac{\partial F(\widehat{\bs{\theta}}_\gamma,\bs{\gamma})}{\partial \bs{\gamma}^\top}.
\end{eqnarray}
The asymptotic behavior of $\widehat{\bs{\gamma}}$ crucially depends on the Jacobian matrix
$Q_c^\prime(\bs{\gamma})$.
Since $\widehat{\bs{\theta}}_\gamma$ does not have a closed form, conditions that are directly imposed on $Q_c^\prime(\bs{\gamma})$ are not easily checked.
To derive feasible conditions, we define
\begin{equation}
\label{definition-H}
H(\bs{\theta}, \bs{\gamma}) = \frac{ \partial Q(\bs{\theta}, \bs{\gamma}) }{ \partial \bs{\gamma}^\top} - \frac{ \partial Q(\bs{\theta}, \bs{\gamma}) }{\partial \bs{\theta}^\top} \left[ \frac{\partial F(\bs{\theta}, \bs{\gamma})}{\partial \bs{\theta}^\top} \right]^{-1}
\frac{\partial F(\bs{\theta}, \bs{\gamma})}{\partial \bs{\gamma}^\top},
\end{equation}
which is a general form of $ \partial Q_c(\bs{\gamma}) / \partial \bs{\gamma}$.
Because $H(\bs{\theta}, \bs{\gamma})$ is the Fisher information matrix of the profiled log-likelihood $\ell_c(\bs{\gamma})$,
we assume that it is positively definite. Otherwise, the network Poisson model will be ill-conditioned.
When $\bs{\theta}\in B(\bs{\theta}^*, \epsilon_{n1})$, we have the equation:
\begin{equation}\label{equation-H-appro}
\frac{1}{n^2} H(\bs{\theta}, \bs{\gamma}^*) = \frac{1}{n^2} H(\bs{\theta}^*, \bs{\gamma}^*) + o(1),
\end{equation}
whose proof is omitted.
Note that the dimension of $H(\bs{\theta}, \bs{\gamma})$ is fixed and every its entry is a sum of $(n-1)n$  terms.
We assume that there exists a number $\kappa_n$ such that
\begin{equation}\label{condition-Qc-gamma}
\sup_{\bs{\theta}\in B(\bs{\theta}^*, \epsilon_{n1})} \| H^{-1}(\bs{\theta}, \bs{\gamma}^*)\|_\infty \le  \frac{ \kappa_n }{ n^2}.
\end{equation}
If $n^{-2}H(\bs{\theta}, \bs{\gamma}^*)$ converges to a constant matrix, then $\kappa_n$ is bounded.
Because $H(\bs{\theta}, \bs{\gamma}^*)$ is positively definite,
\[
\kappa_n = \sqrt{p}\times \sup_{\bs{\theta}\in B(\bs{\theta}^*, \epsilon_{n1})} 1/\lambda_{\min}(\bs{\theta}),
\]
where  $\lambda_{\min}(\bs{\theta})$ is the smallest eigenvalue of $n^{-2}H(\bs{\theta}, \bs{\gamma}^*)$.
Now we formally state the consistency result.

\begin{theorem}\label{Theorem:con}
If $\kappa_n^2 e^{40\rho_n}=o(n/\log n)$, then
 the MLE $(\widehat{\bs{\theta}}, \widehat{\bs{\gamma}})$ exists with probability at least $1-O(1/n)$, and is consistent in the sense that
\begin{align*}\label{Newton-convergence-rate}
\| \widehat{\bs{\gamma}} - \bs{\gamma}^{*} \|_\infty &=  O_p\left(
 \frac{\kappa_n e^{21\rho_n}\log n }{ n }\right  )=o_p(1) \\
\| \widehat{ \bs{\theta} } - \bs{\theta}^* \|_\infty &= O_p\left(  e^{7\rho_{n}} \sqrt{\frac{ \log n}{n} } \right)=o_p(1).
\end{align*}
Further, if $\widehat{\theta}$ exists, it is unique.
\end{theorem}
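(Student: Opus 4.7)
The strategy is to combine Lemma~\ref{lemma:alpha:beta-fixed} with a second Newton iteration on the profiled score $Q_c(\bs{\gamma})$, and then recover the bound on $\widehat{\bs{\theta}}$ by applying Lemma~\ref{lemma:alpha:beta-fixed} once more at $\bs{\gamma}=\widehat{\bs{\gamma}}$. Throughout I work on the high-probability event (of probability at least $1-O(1/n)$) on which $\widehat{\bs{\theta}}_{\bs{\gamma}}$ exists, is unique, and satisfies $\|\widehat{\bs{\theta}}_{\bs{\gamma}}-\bs{\theta}^*\|_\infty = O(e^{7\rho_n}\sqrt{\log n/n})$ uniformly in $\bs{\gamma}\in B(\bs{\gamma}^*,\epsilon_{n2})$. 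Starting from $\bs{\gamma}^{(0)}=\bs{\gamma}^*$, I iterate $\bs{\gamma}^{(k+1)}=\bs{\gamma}^{(k)}-[Q_c'(\bs{\gamma}^{(k)})]^{-1}Q_c(\bs{\gamma}^{(k)})$. Once this iteration converges to some $\widehat{\bs{\gamma}}\in B(\bs{\gamma}^*,\epsilon_{n2})$, the identities $Q_c(\widehat{\bs{\gamma}})=0$ and $F_{\widehat{\gamma}}(\widehat{\bs{\theta}}_{\widehat{\gamma}})=0$ together show that $(\widehat{\bs{\theta}}_{\widehat{\gamma}},\widehat{\bs{\gamma}})$ solves the full score system \eqref{eq:likelihood}, so it is the MLE.

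\textbf{Three ingredients.} A Kantorovich-style Newton argument needs (i) a bound on $\|Q_c(\bs{\gamma}^*)\|_\infty$, (ii) an inverse-Jacobian bound $\|[Q_c'(\bs{\gamma})]^{-1}\|_\infty$ in a neighbourhood, and (iii) a Lipschitz estimate on $Q_c'$. For (i) I split
\[
Q_c(\bs{\gamma}^*)=\sum_{i\ne j}Z_{ij}\bigl(\lambda_{ij}(\widehat{\bs{\theta}}_{\gamma^*},\bs{\gamma}^*)-\lambda_{ij}(\bs{\theta}^*,\bs{\gamma}^*)\bigr)+\sum_{i\ne j}Z_{ij}\bigl(\lambda_{ij}(\bs{\theta}^*,\bs{\gamma}^*)-a_{ij}\bigr).
\]
The second sum is a sum of $n(n-1)$ independent centred terms of variance at most $e^{\rho_n}$, and Bernstein's inequality gives it the order $ne^{\rho_n/2}\sqrt{\log n}$ with high probability. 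For the first sum I Taylor-expand $\lambda_{ij}$ in $\bs{\theta}$ to second order and use the stationarity $F_{\gamma^*}(\widehat{\bs{\theta}}_{\gamma^*})=0$ to annihilate the linear part; the quadratic remainder combined with Lemma~\ref{lemma:alpha:beta-fixed} is $O(n\,e^{C\rho_n}\log n)$. For (ii), equation \eqref{equation:Qc-derivative} identifies $Q_c'(\bs{\gamma})$ with $H(\widehat{\bs{\theta}}_{\gamma},\bs{\gamma})$, so \eqref{equation-H-appro} plus \eqref{condition-Qc-gamma} yields $\|[Q_c'(\bs{\gamma})]^{-1}\|_\infty\le (1+o(1))\kappa_n/n^2$ uniformly on $B(\bs{\gamma}^*,\epsilon_{n2})$. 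For (iii), every entry of $Q_c'$ is an $O(n^2)$ sum of terms bounded by $e^{\rho_n}$, so its Lipschitz constant in $\bs{\gamma}$ on a small ball is $O(n^2 e^{\rho_n})$.

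\textbf{Assembly.} Multiplying (i) and (ii) gives the one-step Newton correction
\[
\|\bs{\gamma}^{(1)}-\bs{\gamma}^*\|_\infty \le \|[Q_c'(\bs{\gamma}^*)]^{-1}\|_\infty\,\|Q_c(\bs{\gamma}^*)\|_\infty = O_p\!\left(\frac{\kappa_n e^{21\rho_n}\log n}{n}\right),
\]
where the exponent $21\rho_n$ absorbs $2\times 7\rho_n$ from squaring the $\widehat{\bs{\theta}}_{\gamma^*}$-bound inside the quadratic remainder together with the $e^{\rho_n}$ factors coming from higher derivatives of $\lambda$ and from the inversion. The hypothesis $\kappa_n^2 e^{40\rho_n}=o(n/\log n)$ ensures that this quantity is $o(\epsilon_{n2})$ and that (Lipschitz)$\times$(inverse)$= o(1)$, so the iteration is a contraction and converges geometrically to a unique limit $\widehat{\bs{\gamma}}\in B(\bs{\gamma}^*,\epsilon_{n2})$ satisfying the stated first bound. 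Applying Lemma~\ref{lemma:alpha:beta-fixed} at $\bs{\gamma}=\widehat{\bs{\gamma}}$ yields $\widehat{\bs{\theta}}=\widehat{\bs{\theta}}_{\widehat{\gamma}}$ with the claimed $\ell_\infty$-rate, and uniqueness of $\widehat{\bs{\theta}}$ follows from uniqueness of $\widehat{\bs{\theta}}_{\bs{\gamma}}$ in Lemma~\ref{lemma:alpha:beta-fixed}.

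\textbf{Main obstacle.} I expect step (i) to be the decisive technical point. A naive expansion would bound the first sum by $n^2 e^{\rho_n}\|\widehat{\bs{\theta}}_{\gamma^*}-\bs{\theta}^*\|_\infty$, which only gives $O(n^{3/2} e^{8\rho_n}\sqrt{\log n})$, too large to drive the iteration to $o_p(1)$. The essential trick is to use the score equations $F_{\gamma^*}(\widehat{\bs{\theta}}_{\gamma^*})=0$ to eliminate the linear part of the Taylor expansion, thereby saving a factor $\sqrt{n/\log n}$; the careful tracking of the exponential-in-$\rho_n$ constants through this quadratic remainder, through the bound \eqref{condition-Qc-gamma}, and through the Lipschitz estimate is what produces the precise exponent $21\rho_n$ in the final rate and the final condition $\kappa_n^2 e^{40\rho_n}=o(n/\log n)$.
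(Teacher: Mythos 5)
Your proposal follows essentially the same route as the paper: a two-stage Newton argument that first invokes Lemma~\ref{lemma:alpha:beta-fixed} at $\bs{\gamma}^*$, then runs the Kantorovich-type iteration on the profiled score $Q_c$ using exactly the paper's three ingredients --- the decomposition $Q_c(\bs{\gamma}^*)=\bigl(Q(\widehat{\bs{\theta}}_{\gamma^*},\bs{\gamma}^*)-Q(\bs{\theta}^*,\bs{\gamma}^*)\bigr)+Q(\bs{\theta}^*,\bs{\gamma}^*)$ (the paper's Lemmas~\ref{lemma-order-Q-beta} and~\ref{lemma-diff-F-Q}), the bound \eqref{condition-Qc-gamma} on the inverse Jacobian, and a Lipschitz estimate on $Q_c'$ (the paper's Lemma~\ref{lemma-Q-Lip}) --- before re-applying Lemma~\ref{lemma:alpha:beta-fixed} at $\widehat{\bs{\gamma}}$. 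The only discrepancies are in intermediate exponents (e.g.\ you quote a Lipschitz constant $O(n^2e^{\rho_n})$ where the paper's Lemma~\ref{lemma-Q-Lip} gives $O(n^2e^{19\rho_n})$, since $Q_c'$ depends on $\bs{\gamma}$ also through $\widehat{\bs{\theta}}_\gamma$), but your final rates and the condition $\kappa_n^2e^{40\rho_n}=o(n/\log n)$ match the paper's.
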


From the above theorem, we can see that $\widehat{\bs{\gamma}}$ has a convergence rate $\log n/n$ and
$\widehat{\bs{\theta}}$ has a convergence rate $(\log n/n)^{1/2}$, up to an additional factor.
If $\|\bs{\gamma}^{*}\|_\infty$ and $\|\bs{\theta}^*\|_\infty$ are constants, then $\rho_n$ and $\kappa_n$ are constants such that
the condition in Theorem \ref{Theorem:con} holds.

\subsection{Asymptotic normality of $\widehat{\bs{\theta}}$ and  $\widehat{\bs{\gamma}}$}

The asymptotic distribution of $\widehat{\bs{\theta}}$ depends  crucially on
the inverse of the Jacobian matrix $F^\prime_\gamma(\bs{\theta})$, which generally
 does not have a closed form.
In order to characterize this matrix,
we introduce a general class of matrices that encompass the Fisher matrix.
Given two positive numbers $m$ and $M$ with $M \ge m >0$,
we say the $(2n-1)\times (2n-1)$ matrix $V=(v_{i,j})$ belongs to the class $\mathcal{L}_{n}(m, M)$ if the following holds:
\begin{equation}\label{eq1}
\begin{array}{l}
0\le v_{i,i}-\sum_{j=n+1}^{2n-1} v_{i,j} \le M, ~~ i=1, \ldots, 2n-1, \\
v_{i,j}=0, ~~ i,j=1,\ldots,n,~ i\neq j, \\
v_{i,j}=0, ~~ i,j=n+1, \ldots, 2n-1,~ i\neq j,\\
m\le v_{i,j}=v_{j,i} \le M, ~~ i=1,\ldots, n,~ j=n+1,\ldots, 2n,~ j\neq n+i, \\
\end{array}
\end{equation}
Clearly, if $V\in \mathcal{L}_{n}(m, M)$, then $V$ is a $(2n-1) \times (2n-1)$ diagonally dominant, symmetric nonnegative
matrix. It must be positively definite.
The definition of $\mathcal{L}_{n}(m, M)$ here is wider than that in   \cite{Yan:Leng:Zhu:2016a}, where
the diagonal elements are equal to the row sum excluding themselves for some rows.
One can easily show that $F^\prime_\gamma(\bs{\theta})$ belongs to this matrix class.
With some abuse of notation, we use $V$ to denote the Fisher information matrix for the vector
parameter $\bs{\theta}$.

To describe the exact form of elements of $V$, $v_{ij}$ for $i,j=1,\ldots, 2n-1$, $i\neq j$, we define
\begin{equation*}
u_{ij} = \lambda(\pi_{ij}), ~~ u_{ii}=0,~~
u_{i\cdot} = \sum_{j=1}^n u_{ij}, ~~ u_{\cdot j} = \sum_{i=1}^n u_{ij}.
\end{equation*}
Note that $u_{ij}$ is the variance of $a_{ij}$.
Then the elements of $V$ are
\[
v_{ij} = \begin{cases} u_{i\cdot}, & i=j=1, \ldots, n, \\
u_{i,j-n}, &i=1, \ldots, n, j=n+1, \ldots, 2n-1, j\neq i+n, \\
u_{j,i-n}, &i=n+1, \ldots, 2n, j=1, \ldots, n-1, j\neq i-n,\\
u_{\cdot j-n}, & i=j=n+1, \ldots, 2n-1, \\
0, & \mbox{others}.
\end{cases}
\]
\cite{Yan:Leng:Zhu:2016a} proposed to approximate the inverse $V^{-1}$ by the matrix $S=(s_{i,j})$, which is defined as
\begin{equation}
\label{definition:S}
s_{i,j}=\left\{\begin{array}{ll}\frac{\delta_{i,j}}{u_{i\cdot}} + \frac{1}{u_{\cdot n}}, & i,j=1,\ldots,n, \\
-\frac{1}{u_{\cdot n}}, & i=1,\ldots, n,~~ j=n+1,\ldots,2n-1, \\
-\frac{1}{u_{\cdot n}}, & i=n+1,\ldots,2n,~~ j=1,\ldots,n-1, \\
\frac{\delta_{i,j}}{u_{\cdot (j-n)}}+\frac{1}{u_{\cdot n}}, & i,j=n+1,\ldots, 2n-1,
\end{array}
\right.
\end{equation}
where $\delta_{i,j}=1$ when $i=j$ and $\delta_{i,j}=0$ when $i\neq j$.

We derive the asymptotic normality of $\widehat{\bs{\bs{\theta}}}$ by representing
$\bs{\widehat{\bs{\theta}}}$ as a function of $\mathbf{d}$ and $\mathbf{b}$ with an explicit expression and a remainder term.
This is done via applying a second Taylor expansion to $F(\widehat{\bs{\theta}}, \widehat{\bs{\gamma}})$ and
showing various remainder terms asymptotically neglect.

\begin{theorem}\label{Theorem:binary:central}
 If $\kappa_n e^{23\rho_n} = o( n^{1/2}/\log n)$, then for any fixed $k\ge 1$, as $n \to\infty$, the vector consisting of the first $k$ elements of $(\widehat{\bs{\bs{\theta}}}-\bs{\bs{\theta}}^*)$ is asymptotically multivariate normal with mean $\mathbf{0}$ and covariance matrix given by the upper left $k \times k$ block of $S$ defined in \eqref{definition:S}.
\end{theorem}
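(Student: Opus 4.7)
The plan is to expand the score equation $F(\widehat{\bs{\theta}},\widehat{\bs{\gamma}})=\mathbf{0}$ by a second-order Taylor formula about $(\bs{\theta}^*,\bs{\gamma}^*)$, invert the Jacobian, and then show that the resulting expression for $\widehat{\bs{\theta}}-\bs{\theta}^*$ is an explicit linear functional of the independent Poisson edges plus remainder terms that are $o_p(n^{-1/2})$. Writing $V=\partial F/\partial \bs{\theta}^\top$ and $W=\partial F/\partial \bs{\gamma}^\top$ both evaluated at $(\bs{\theta}^*,\bs{\gamma}^*)$, the expansion is
\begin{equation*}
\mathbf{0}=F(\bs{\theta}^*,\bs{\gamma}^*)+V(\widehat{\bs{\theta}}-\bs{\theta}^*)+W(\widehat{\bs{\gamma}}-\bs{\gamma}^*)+R,
\end{equation*}
where $R$ collects the quadratic Taylor remainder. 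Inverting $V$ and swapping it for the explicit matrix $S$ from \eqref{definition:S} yields
\begin{equation*}
\widehat{\bs{\theta}}-\bs{\theta}^*=-SF(\bs{\theta}^*,\bs{\gamma}^*)\;-\;(V^{-1}-S)F(\bs{\theta}^*,\bs{\gamma}^*)\;-\;V^{-1}W(\widehat{\bs{\gamma}}-\bs{\gamma}^*)\;-\;V^{-1}R.
\end{equation*}

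For the leading linear term, the coordinates of $F(\bs{\theta}^*,\bs{\gamma}^*)$ are sums of the independent centred Poisson variables $u_{ij}-a_{ij}$. The $i$-th entry of $-SF(\bs{\theta}^*,\bs{\gamma}^*)$ with $i\le n$ is an explicit linear combination of these independent summands with weights read off from \eqref{definition:S}, and a direct computation shows its variance equals $s_{ii}$ up to an $o(1/n)$ term. Applying the Lindeberg--Feller CLT to this array, with negligibility following from $e^{-\rho_n}\le u_{ij}\le e^{\rho_n}$ and boundedness of the $S$-weights by $O(1/n)$, yields one-dimensional asymptotic normality for each coordinate. A Cram\'er--Wold argument over arbitrary linear combinations of the first $k$ coordinates then produces joint normality with limiting covariance equal to the upper-left $k\times k$ block of $S$.

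For the remainders, I would invoke Theorem~\ref{Theorem:con} to substitute the rates $\|\widehat{\bs{\theta}}-\bs{\theta}^*\|_\infty=O_p(e^{7\rho_n}\sqrt{\log n/n})$ and $\|\widehat{\bs{\gamma}}-\bs{\gamma}^*\|_\infty=O_p(\kappa_n e^{21\rho_n}\log n/n)$. The bias-like term $V^{-1}W(\widehat{\bs{\gamma}}-\bs{\gamma}^*)$ is bounded by combining $\|V^{-1}\|_\infty\lesssim e^{\rho_n}/n$ (which follows from $V\in\mathcal{L}_n(e^{-\rho_n},e^{\rho_n})$ together with its quasi-diagonally dominant structure), the row-entry bound $\|W\|_\infty\lesssim ne^{\rho_n}$, and the consistency rate for $\widehat{\bs{\gamma}}$; the resulting size is $\kappa_n e^{c\rho_n}\log n/n$, which is $o(n^{-1/2})$ exactly under the stated hypothesis $\kappa_n e^{23\rho_n}=o(n^{1/2}/\log n)$. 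The Taylor remainder $V^{-1}R$ is handled analogously by using $|R_i|\lesssim ne^{\rho_n}\|\widehat{\bs{\theta}}-\bs{\theta}^*\|_\infty^2$, yielding the rate $e^{c\rho_n}\log n/n=o(n^{-1/2})$.

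The main obstacle is the approximation term $(V^{-1}-S)F(\bs{\theta}^*,\bs{\gamma}^*)$. I would need a sharp entrywise bound, of order $e^{c\rho_n}/n^2$, on the difference $V^{-1}-S$ valid uniformly across the matrix class $\mathcal{L}_n(e^{-\rho_n},e^{\rho_n})$ of \eqref{eq1}, which is slightly wider than the class analysed in \cite{Yan:Leng:Zhu:2016a}; combined with a Bernstein-type tail bound giving $\|F(\bs{\theta}^*,\bs{\gamma}^*)\|_\infty=O_p(\sqrt{ne^{\rho_n}\log n})$, this would force each of the first $k$ coordinates of $(V^{-1}-S)F^*$ to be $o_p(n^{-1/2})$. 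Establishing this matrix approximation with explicit dependence on $\rho_n$ tight enough that the hypothesis $\kappa_n e^{23\rho_n}=o(n^{1/2}/\log n)$ suffices is, I expect, the technically hardest step in the argument.
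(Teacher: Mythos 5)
Your proposal follows essentially the same route as the paper: a second-order Taylor expansion of the score equation, inversion of $V$, the splitting $V^{-1}=S+(V^{-1}-S)$, a Lindeberg--Feller/Cram\'er--Wold CLT for $S(\mathbf{h}-\E\mathbf{h})$ (the paper's Lemma~\ref{lem:binary:central}), and $o_p(n^{-1/2})$ bounds on the three remainder terms using the consistency rates of Theorem~\ref{Theorem:con}. The matrix approximation bound you correctly single out as the hardest step is exactly what the paper imports from Proposition~1 of \cite{Yan:Leng:Zhu:2016a} (with details deferred to its supplementary material), so your sketch is a faithful reconstruction of the paper's argument.
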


\begin{remark}
By Theorem \ref{Theorem:binary:central}, for any fixed $i$, as $n\rightarrow \infty$, the asymptotic variance of $\hat{\theta}_i$ is $1/v_{i,i}^{1/2}$,
whose magnitude is between $O(n^{-1/2}e^{\rho_n})$ and $O(n^{-1/2})$.
\end{remark}

\begin{remark}
Under the restriction $\alpha_n=\beta_n=0$, the central limit theorem for the MLE $\widehat{\mu}$ is stated as:
$(u_{n\cdot}+u_{\cdot n})^{-1/2}( \widehat{\mu} - \mu )$,
converges in distribution to the standard normality.
\end{remark}

Now, we present the asymptotic normality of $\widehat{\bs{\gamma}}$.
Let
\[
V = \frac{\partial F(\bs{\theta}^*, \bs{\gamma}^*)}{ \partial \bs{\theta}^\top}, ~~
V_{\gamma\gamma} = \frac{\partial Q(\bs{\theta}^*, \bs{\gamma}^* ) }{ \partial \bs{\gamma}^\top },~~
V_{\theta\gamma} = \frac{\partial F(\bs{\theta}^*, \bs{\gamma}^* ) }{ \partial \bs{\gamma}^\top }.
\]
Following \cite{Amemiya:1985} (p. 126), the Hessian matrix of the concentrated log-likelihood function $\ell^c (\bs{\gamma}^*)$
is $V_{\gamma\gamma} - V_{\theta\gamma}^\top V^{-1} V_{\theta\gamma}$.
To state the form of the limit distribution of $\hat{\bs{\gamma}}$, define
\begin{equation}\label{eq:I0:beta}
I_n(\bs{\gamma}^*) =  \frac{1}{(n-1)n}(V_{\gamma\gamma} - V_{\theta\gamma}^\top V^{-1} V_{\theta\gamma}).
\end{equation}
Assume that the limit of  $I_n(\bs{\gamma}^*)$ exists as $n$ goes to infinity, denoted by $I_*(\bs{\gamma}^*)$.
Then, we have the following theorem.

\begin{theorem}\label{theorem:covariate:asym}
If $e^{9\rho_n} =o(n^{1/2}/(\log n)^2)$, then as $n$ goes to infinity, the $p$-dimensional vector
$N^{1/2}(\widehat{\bs{\gamma}}-\bs{\gamma}^* )$ is asymptotically multivariate normal distribution
with mean $I_*^{-1} (\bs{\gamma}^*) B_*$ and covariance matrix $I_*^{-1}(\bs{\gamma})$,
where $N=n(n-1)$ and $B_*$ is the bias term:
\begin{equation*}\label{definition:Bstar}
B_*=\lim_{n\to\infty} \frac{1}{2\sqrt{N}} \left[\sum_{i=1}^n \frac{  \sum_{j=1,j\neq i}^n \lambda^{\prime\prime}(\pi_{ij}^*) Z_{ij} }
{  \sum_{j=1,j\neq i}^n \lambda^\prime(\pi_{ij}^*) }+\sum_{j=1}^n \frac{  \sum_{i=1,i\neq j}^n \lambda^{\prime\prime}(\pi_{ij}^*)Z_{ij} }
{ \sum_{i=1,i\neq j}^n \lambda^\prime(\pi_{ij}^*) } \right].
\end{equation*}
\end{theorem}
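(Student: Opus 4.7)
The plan is to exploit the profile score identity $Q_c(\widehat{\bs{\gamma}})=0$ and produce a Bahadur-type expansion
\[
\sqrt{N}(\widehat{\bs{\gamma}}-\bs{\gamma}^*) = -\Bigl[\tfrac{1}{N}Q_c'(\bar{\bs{\gamma}})\Bigr]^{-1}\frac{Q_c(\bs{\gamma}^*)}{\sqrt{N}},
\]
obtained by Taylor-expanding $Q_c$ around $\bs{\gamma}^*$ with $\bar{\bs{\gamma}}$ lying on the segment between $\widehat{\bs{\gamma}}$ and $\bs{\gamma}^*$. Using \eqref{equation:Qc-derivative}, the Jacobian equals $H(\widehat{\bs{\theta}}_{\bar\gamma},\bar{\bs{\gamma}})$; combined with the consistency from Theorem~\ref{Theorem:con} and the continuity estimate \eqref{equation-H-appro}, I would show $N^{-1}Q_c'(\bar{\bs{\gamma}})\to I_*(\bs{\gamma}^*)$. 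It therefore remains to analyse $Q_c(\bs{\gamma}^*)/\sqrt{N}$.

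The crux is a second-order expansion of $Q_c(\bs{\gamma}^*)=Q(\widehat{\bs{\theta}}_{\gamma^*},\bs{\gamma}^*)$ around $\bs{\theta}^*$. Writing $\widetilde{\bs{\theta}}:=\widehat{\bs{\theta}}_{\gamma^*}-\bs{\theta}^*$, I would expand each summand $Z_{ij}\lambda_{ij}(\widetilde{\bs{\theta}}+\bs{\theta}^*,\bs{\gamma}^*)$ to second order in $(\widetilde\alpha_i+\widetilde\beta_j)$, and simultaneously expand the score identity $F(\widehat{\bs{\theta}}_{\gamma^*},\bs{\gamma}^*)=0$ to second order to solve $\widetilde{\bs{\theta}}=-V^{-1}F(\bs{\theta}^*,\bs{\gamma}^*)+\text{quadratic remainder}$. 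Substituting back yields the decomposition
\[
\frac{Q_c(\bs{\gamma}^*)}{\sqrt{N}} = \underbrace{\frac{1}{\sqrt{N}}\!\sum_{i\neq j}Z_{ij}(a_{ij}-\lambda_{ij}^*)-V_{\theta\gamma}^\top V^{-1}\frac{F(\bs{\theta}^*,\bs{\gamma}^*)}{\sqrt{N}}}_{\text{linear / CLT part}} + \underbrace{\frac{1}{2\sqrt{N}}\!\sum_{i\neq j}Z_{ij}\lambda''(\pi_{ij}^*)(\widetilde\alpha_i+\widetilde\beta_j)^2}_{\text{quadratic bias part}} + R_n,
\]
where $R_n$ collects higher-order terms.

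For the linear part I would recognise that $F(\bs{\theta}^*,\bs{\gamma}^*)$ is simply the centred bi-degree vector (entries $\sum_j(\lambda_{ij}^*-a_{ij})$ and $\sum_i(\lambda_{ij}^*-a_{ij})$), so the entire linear piece is a sum of independent centred terms indexed by ordered pairs $(i,j)$; after computing its covariance I expect to recover exactly $I_n(\bs{\gamma}^*)$ (since $V_{\gamma\gamma}-V_{\theta\gamma}^\top V^{-1}V_{\theta\gamma}$ is precisely the residual information), and Lindeberg's CLT for bounded covariates under the assumption $e^{9\rho_n}=o(n^{1/2}/(\log n)^2)$ delivers asymptotic normality with covariance $I_*(\bs{\gamma}^*)$. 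For the quadratic bias I would use the sharp first-order approximation $\widetilde\alpha_i=(d_i-\E d_i)/u_{i\cdot}+O_p(\text{smaller})$ and $\widetilde\beta_j=(b_j-\E b_j)/u_{\cdot j}+O_p(\text{smaller})$ (this follows from the two-stage Newton analysis behind Lemma~\ref{lemma:alpha:beta-fixed}); taking expectations with $\E(d_i-\E d_i)^2=u_{i\cdot}$ and $\E(b_j-\E b_j)^2=u_{\cdot j}$ and using $\lambda'=\lambda''$ for Poisson, the leading deterministic contribution reduces to exactly $B_*$, while the cross term $\E(d_i-\E d_i)(b_j-\E b_j)=\lambda_{ij}^*$ contributes to a negligible order.

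The main obstacle will be controlling the remainder $R_n$: it contains cubic-in-$\widetilde{\bs{\theta}}$ pieces and the stochastic fluctuation of the quadratic term around its expectation. Each such term is bounded using $\|\widetilde{\bs{\theta}}\|_\infty=O_p(e^{7\rho_n}\sqrt{\log n/n})$ from Lemma~\ref{lemma:alpha:beta-fixed}, the uniform bound $|\lambda^{(k)}(\pi_{ij})|\le e^{\rho_n}$ from \eqref{ineq-mu-third-derivative}, and Bernstein-type concentration for the self-normalised degree corrections; the growth condition $e^{9\rho_n}=o(n^{1/2}/(\log n)^2)$ is precisely what is needed to push these bounds below $o_p(1)$ after the $1/\sqrt{N}$ scaling. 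Once this is in hand, combining the Jacobian limit with the decomposition yields $\sqrt{N}(\widehat{\bs{\gamma}}-\bs{\gamma}^*)\overset{d}{\to}\mathcal{N}(I_*^{-1}(\bs{\gamma}^*)B_*,\,I_*^{-1}(\bs{\gamma}^*))$ by Slutsky's theorem.
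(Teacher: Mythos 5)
Your proposal is correct and follows essentially the same route as the paper: the profile-score identity $Q_c(\widehat{\bs{\gamma}})=0$ with a mean-value expansion to get the Bahadur representation, convergence of $N^{-1}Q_c'(\bar{\bs{\gamma}})$ to $I_*(\bs{\gamma}^*)$, and a Taylor expansion of $Q(\widehat{\bs{\theta}}_{\gamma^*},\bs{\gamma}^*)$ in $\bs{\theta}$ whose linear part is exactly the paper's efficient-score sum $\sum_{i\neq j}s_{ij}(\bs{\theta}^*,\bs{\gamma}^*)$ (handled by Lyapunov's CLT), whose quadratic part yields $B_*$ via the first-order approximation $\widehat{\theta}_i-\theta_i^*\approx [S(\mathbf{h}-\E\mathbf{h})]_i$, and whose remainder is controlled by the $\ell_\infty$-bounds from Lemma \ref{lemma:alpha:beta-fixed} under the stated growth condition. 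The only cosmetic difference is that you carry a second-order expansion plus a remainder $R_n$ where the paper writes an explicit third-order Lagrange term $S_3$.
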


\begin{remark}
The limiting distribution of $\bs{\widehat{\gamma}}$ is involved with a bias term
\[
\mu_*=\frac{ I_*^{-1}(\bs{\gamma}) B_* }{ \sqrt{n(n-1)}}.
\]
Since the MLE $\bs{\widehat{\gamma}}$ is not centered at the true parameter value, the confidence intervals
and the p-values of hypothesis testing for $\bs{\widehat{\gamma}}$ may not achieve the nominal level without bias-correction under the null: $\bs{\gamma}^*=0$.
This is referred to as the so-called incidental parameter problem in econometric literature [\cite{Neyman:Scott:1948}].
The produced bias is due to different convergence rates of $\widehat{\bs{\gamma}}$ and $\widehat{\bs{\theta}}$.
As discussed in \cite{Yan-Jiang-Fienberg-Leng2018}, we could use the analytical bias correction formula:
$\bs{\widehat{\gamma}}_{bc} = \bs{\widehat{\gamma}}- \hat{I}^{-1} \hat{B}/\sqrt{n(n-1)}$,
where $\hat{I}$ and $\hat{B}$ are the estimates of $I_*$ and $B_*$ by replacing
$\bs{\gamma}$ and $\bs{\theta}$ in their expressions with their MLEs $\bs{\widehat{\gamma}}$ and
$\bs{\widehat{\theta}}$, respectively.
In the simulation in next section, we can see that
there is a little difference between uncorrected estimates and bias-corrected estimates, which is different from
the covariate-$p_0$ model for binary directed networks in \cite{Yan-Jiang-Fienberg-Leng2018}.
This may be due to that the infinitely weighted values for edges attenuate the bias effect.
\end{remark}

\section{Numerical Studies}
\label{section:simulation}
In this section, we evaluate the asymptotic results of the MLEs for model \eqref{model} through simulation
studies and a real data example.

\subsection{Simulation studies}

Similar to \cite{Yan-Jiang-Fienberg-Leng2018}, the parameter values take a linear form. Specifically,
we set $\alpha_{i}^* = ic\log n/n$ for $i=0, \ldots, n$ and let $\beta_i^*=\alpha_i^*$, $i=0, \ldots, n$ for simplicity.
Note that there are $n+1$ nodes in the simulations.
We considered four different values for $c$ as $c\in \{0, 0.2, 0.4, 0.6\}$.
By allowing the true values of $\bs{\alpha}$ and $\bs{\beta}$ to grow with $n$, we intended to assess the asymptotic properties under different asymptotic regimes.
Being different from \cite{Yan-Jiang-Fienberg-Leng2018} with $p=2$, we set $p=3$ here.
The first element of $Z_{ij}$ was generated from standard normal distribution; the second element $Z_{ij2}$
formed by letting $Z_{ij2}=|X_{i1} - X_{j1}|$, where $X_{i1}$ is the first entry
of the $2$-dimensional node-specific covariate $X_i$ is independently generated from a $Beta(2,2)$ distribution;
and the third element $Z_{ij3}=X_{i2}*X_{j2}$, where $X_{i2}$
follows a discrete distribution taking values $1$ and $-1$ with probabilities $0.3$ and $0.7$.
The density parameter is $\mu= -\log n/4$.

Note that by Theorems \ref{Theorem:binary:central}, $\hat{\xi}_{i,j} = [\hat{\alpha}_i-\hat{\alpha}_j-(\alpha_i^*-\alpha_j^*)]/(1/\hat{v}_{i,i}+1/\hat{v}_{j,j})^{1/2}$, $\hat{\zeta}_{i,j} = (\hat{\alpha}_i+\hat{\beta}_j-\alpha_i^*-\beta_j^*)/(1/\hat{v}_{i,i}+1/\hat{v}_{n+j,n+j})^{1/2}$, and $\hat{\eta}_{i,j} = [\hat{\beta}_i-\hat{\beta}_j-(\beta_i^*-\beta_j^*)]/(1/\hat{v}_{n+i,n+i}+1/\hat{v}_{n+j,n+j})^{1/2}$
are all asymptotically distributed as standard normal random variables, where $\hat{v}_{i,i}$ is the estimate of $v_{i,i}$
by replacing $(\bs{\gamma}^*, \bs{\theta}^*)$ with $(\bs{\widehat{\gamma}}, \bs{\widehat{\theta}})$.
We record the coverage probability of the 95\% confidence interval, the length of the confidence interval, and the frequency that the MLE does not exist.  The results for $\hat{\xi}_{i,j}$, $\hat{\zeta}_{i,j}$ and $\hat{\eta}_{i,j}$ are similar, thus only the results of $\hat{\xi}_{i,j}$ are reported. We simulated networks with $n=100$ or $n=200$.
Each simulation is repeated $5,000$ times.

Table \ref{Table:alpha} reports the $95\%$ coverage frequencies for $\alpha_i - \alpha_j$ and
the length of the confidence interval.
As we can see, the length of the confidence interval increases with $c$ and decreases with $n$,
which qualitatively agrees with the theory.  All simulated coverage frequencies are all close to the nominal level.
This indicates that the conditions in the theorems may be relaxed greatly.

{\renewcommand{\arraystretch}{1}
\begin{table}[!h]\centering
\caption{The reported values are the coverage frequency ($\times 100\%$) for $\alpha_i-\alpha_j$ for a pair $(i,j)$ / the length of the confidence interval. The pair $(0,0)$ indicates
those values for the density parameter $\mu$.}
\label{Table:alpha}
 \begin{threeparttable}
\begin{tabular}{ccccccc}
\hline
n       &  $(i,j)$ & $c=0$ & $c=0.2$ & $c=0.4$ & $c=0.6$ \\
\hline
100         &$(1,2)   $&$ 94.36 / 0.529 $&$ 94.72 / 0.408 $&$ 94.54 / 0.305 $&$ 95.1 / 0.222 $ \\
            &$(50,51) $&$ 94.42 / 0.529 $&$ 94.42 / 0.326 $&$ 94.82 / 0.195 $&$ 94.66 / 0.113 $ \\
            &$(99,100)$&$ 94.92 / 0.531 $&$ 94.24 / 0.261 $&$ 94.66 / 0.125 $&$ 93.62 / 0.058 $ \\
            &$(1,100)$ &$ 94.06 / 0.530 $&$ 94.24 / 0.339 $&$ 94.62 / 0.227 $&$ 94.68 / 0.157 $ \\
            &$(1,50)$ &$ 94.8 / 0.530 $&$ 93.92 / 0.370 $&$ 94.54 / 0.255 $&$ 95.12 / 0.173 $\\
            &$(0,0)^*$ &$ 94.68 / 0.039 $&$ 94.64 / 0.024 $&$ 95.06 / 0.013 $&$ 94.94 / 0.007 $ \\

&&&&&&\\
200         &$(1,2)   $&$ 94.34 / 0.401  $&$ 94.4 / 0.301  $&$ 94.64 / 0.215  $&$ 95.32 / 0.148  $ \\
            &$(50,51) $&$ 94.34 / 0.402  $&$ 94.64 / 0.233  $&$ 94.98 / 0.126  $&$ 94.56 / 0.068  $ \\
            &$(99,100)$&$ 94.66 / 0.406  $&$ 94.6 / 0.177  $&$ 93.66 / 0.075  $&$ 94.26 / 0.031  $\\
            &$(1,100)$ &$ 94.5 / 0.404  $&$ 94.68 / 0.242  $&$ 94.86 / 0.155  $&$ 95.26 / 0.102  $ \\
            &$(1,50)$ &$ 94.04 / 0.402  $&$ 94.16 / 0.267  $&$ 94.58 / 0.173  $&$ 94.98 / 0.112  $ \\
            &$(0,0)$ &$ 95.1 / 0.020  $&$ 95.26 / 0.011  $&$ 95.2 / 0.006  $&$ 94.9 / 0.003  $ \\
\hline
\end{tabular}
\begin{tablenotes}
        \footnotesize
        \item[$^{*}$] $(0,0)$ indicates the simulated results for $\widehat{\mu}$.
      \end{tablenotes}
    \end{threeparttable}
\end{table}
}

Table \ref{Table:gamma} reports simulation results for the estimate $\bs{\widehat{\gamma}}$
and bias correction estimate  $\bs{\widehat{\gamma}}_{bc} (= \bs{\widehat{\gamma}}- \hat{I}^{-1} \hat{B}/\sqrt{n(n-1)})$ at the nominal level $95\%$.
The coverage frequencies for the uncorrected estimate are all close to those for corrected estimates.
All simulated coverage frequencies achieves the nominal level.
As expected, the standard error increases with $c$ and decreases with $n$.

{\renewcommand{\arraystretch}{1}
\begin{table}[!htbp]\centering
\caption{
The reported values are the coverage frequency ($\times 100\%$) of $\gamma_i$ for $i$ with corrected estimates (uncorrected estimates) / length of confidence interval
 /the frequency ($\times 100\%$) that the MLE did not exist ($\bs{\gamma}^*=(1, 1, 1)^\top$).
}
\label{Table:gamma}
\begin{tabular}{cclllcc}
\hline
$n$     &   $c$            & $\gamma_1$            & $\gamma_2$ & $\gamma_3$\\
\hline
$100$   & $0$             &$ 94.68 ( 94.66 ) / 0.039 $&$ 94.7 ( 94.86 ) / 0.248 $&$ 95.42 ( 95.46 ) / 0.066 $ \\

        & $0.2$           &$ 94.64 ( 94.64 ) / 0.024 $&$ 94.04 ( 94.14 ) / 0.151 $&$ 94.56 ( 94.56 ) / 0.040 $ \\

        & $0.4$           &$ 95.06 ( 95.06 ) / 0.013 $&$ 94.5 ( 94.58 ) / 0.086 $&$ 94.72 ( 94.74 ) / 0.023 $ \\

        & $0.6$           &$ 94.94 ( 94.94 ) / 0.007 $&$ 95.44 ( 95.5 ) / 0.047 $&$ 94.7 ( 94.7 ) / 0.012 $\\

$200$   & $0$            &$ 95.1 ( 95.04 ) / 0.020 $&$ 94.62 ( 94.8 ) / 0.132 $&$ 95.18 ( 95.2 ) / 0.036 $ \\
        & $0.2$      &$ 95.26 ( 95.26 ) / 0.011 $&$ 94.76 ( 94.96 ) / 0.074 $&$ 95.6 ( 95.62 ) / 0.020 $\\

        & $0.4$            &$ 95.2 ( 95.2 ) / 0.006 $&$ 95.08 ( 95.08 ) / 0.038 $&$ 94.68 ( 94.68 ) / 0.010 $ \\
        & $0.6$      &$ 94.9 ( 94.9 ) / 0.003 $&$ 95.06 ( 95.04 ) / 0.018 $&$ 94.52 ( 94.52 ) / 0.005 $\\

\hline
\end{tabular}
\end{table}
}

\subsection{A real data example}

We use the Enron email dataset as an example analysis [\cite{Cohen2004}], available from \url{https://www.cs.cmu.edu/~enron/}.
The raw data is messy and needs to be cleaned before any analysis is conducted.
\cite{Zhou2007} applied data cleaning strategies to compile the Enron email dataset.
We use their cleaned data for the subsequent analysis.
A weighted directed graph formed, where nodes represent users and an weighted edge denotes that there are $k$ messages from user $i$ to user $j$
if  $a_{ij}=k$.
This leaves a strongly connected network with $156$ nodes and $2715$ edges having positive weights.
The minimum, $1/4$ quantile, median, $3/4$ quantile and maximum values of $d$ are $0$, $18.75$, $80$, $307$ and $3844$, respectively. And the minimum, $1/4$ quantile, median, $3/4$ quantile and maximum values of $b$ are $0$, $70.75$, $142$, $299.25$ and $1797$, respectively.
\begin{figure}[!htp]
  \centering
  \subfigure{
  \centering
    \includegraphics[width=0.45\linewidth]{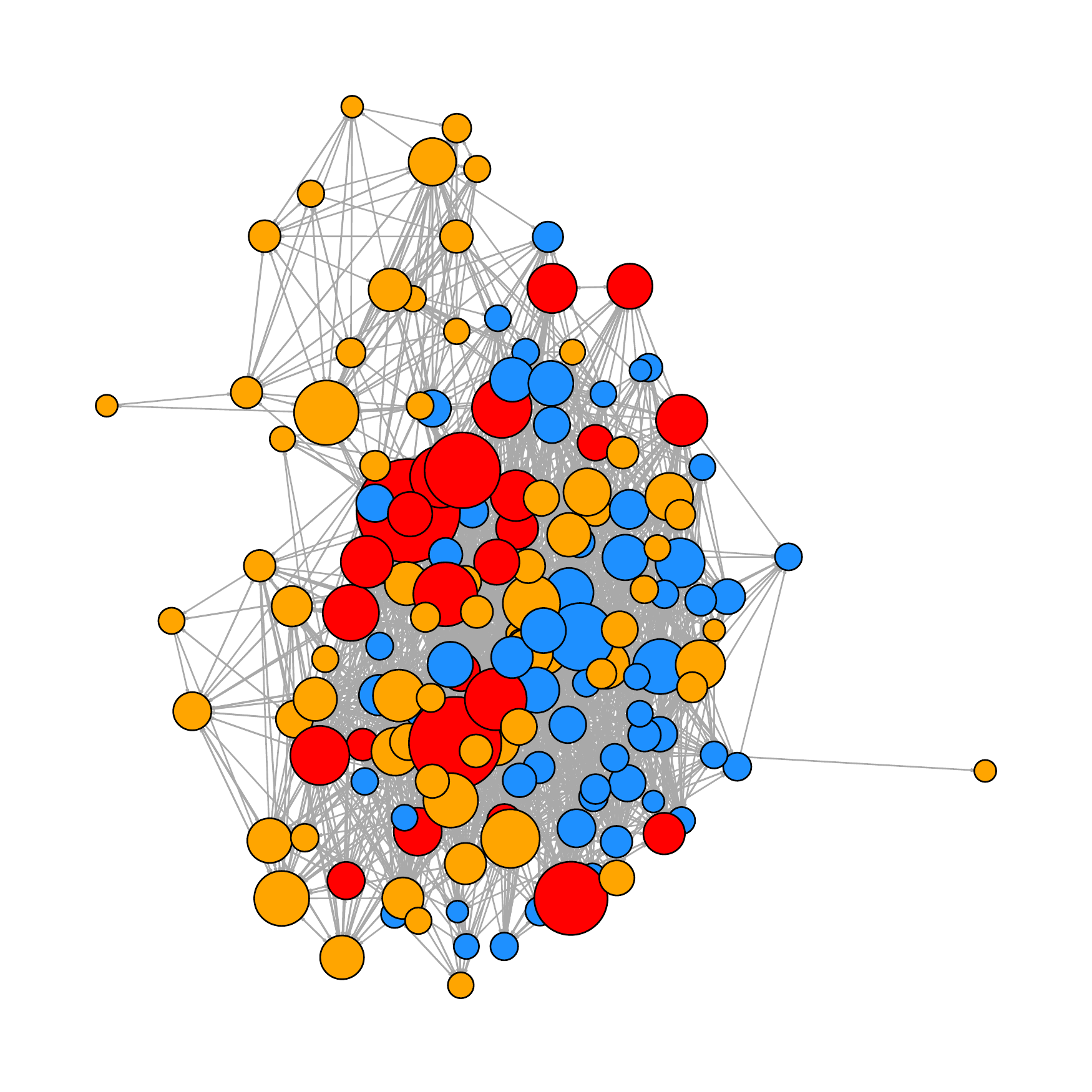}}
  \subfigure{
    \centering
    \includegraphics[width=0.45\linewidth]{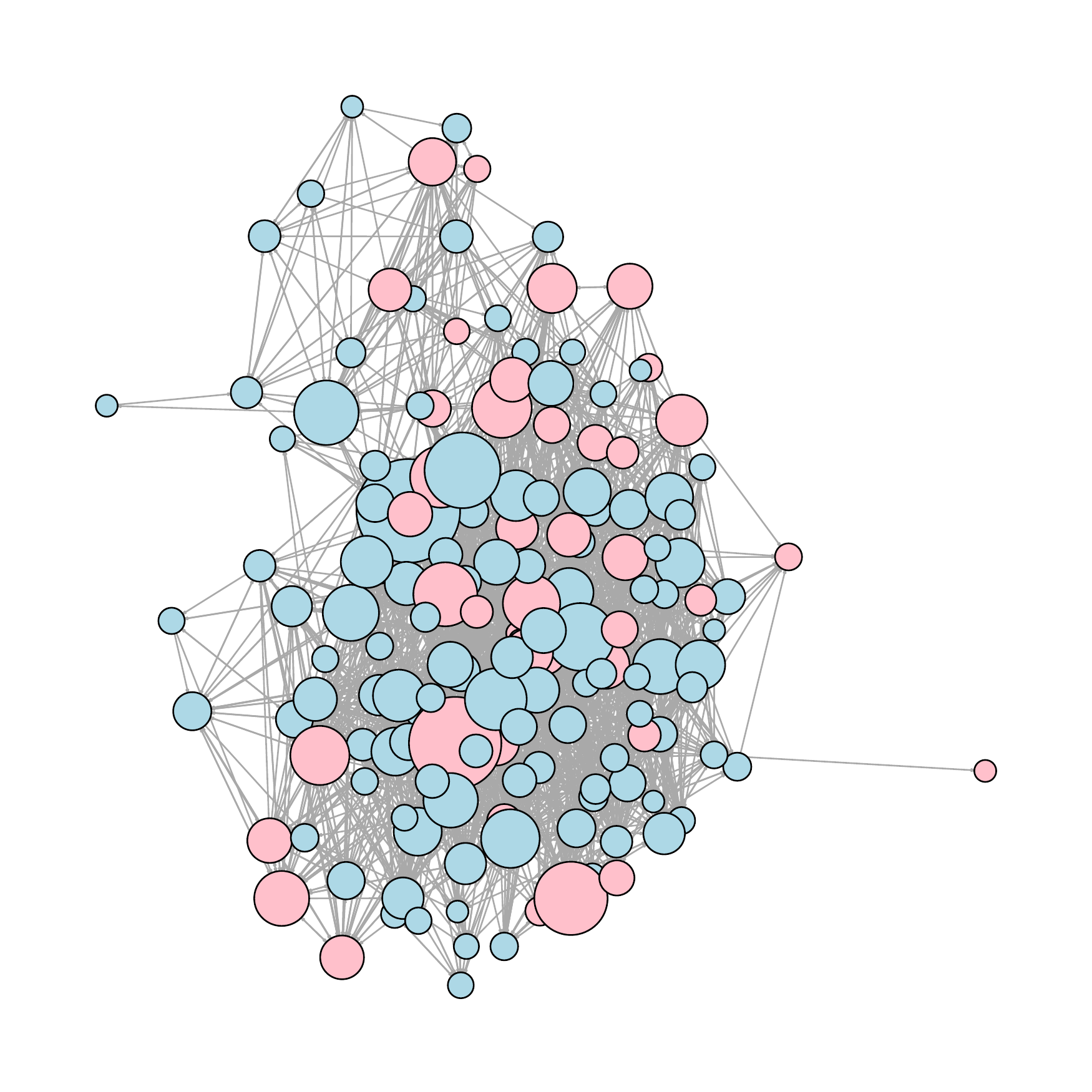}}
   \caption{Visualization of Enran email network. 
The vertex sizes are proportional to nodal degrees, where the sizes with degrees less than $5$ are the same.
In the left graph, the colors indicate different departments (red for legal and blue for trading and orange for other), while in the right graph, the colors represent different genders (light blue for male and pink for female).
}
\label{figure-data} 
\end{figure}

\begin{table}[!h]\centering
\renewcommand\arraystretch{1.0}
\scriptsize
\vskip5pt
\caption{The estimates of $\alpha_i$ and $\beta_j$ and their standard errors in the Enran email data set
with $\widehat{\alpha}_{1}=\widehat{\beta}_{1}=0$.}
\vskip5pt
\label{Table:alphabeta:real}
\resizebox{\textwidth}{11cm}{
\begin{tabular}{ccc ccc ccc ccc ccc}
\hline
Node & $d_i$  &  $\hat{\alpha}_i$ & $\hat{\sigma}_i$ & $b_j$ &  $\hat{\beta}_i$ & $\hat{\sigma}_j$  && Node & $d_i$  &  $\hat{\alpha}_i$ & $\hat{\sigma}_i$ & $b_j$ &  $\hat{\beta}_i$ & $\hat{\sigma}_j$ \\
\hline
\\
2 &$ 18 $&$ -1.884 $&$ 0.241 $&$ 70 $&$ -1.627 $&$ 0.131 $&& 80 &$ 79 $&$ -0.520 $&$ 0.123 $&$ 167 $&$ -0.894 $&$ 0.094 $ \\
3 &$ 15 $&$ -1.787 $&$ 0.264 $&$ 304 $&$ -0.043 $&$ 0.079 $&& 81 &$ 132 $&$ -0.003 $&$ 0.100 $&$ 226 $&$ -0.588 $&$ 0.086 $ \\
4 &$ 312 $&$ 1.210 $&$ 0.076 $&$ 904 $&$ 0.518 $&$ 0.064 $&& 82 &$ 220 $&$ 0.504 $&$ 0.084 $&$ 152 $&$ -0.980 $&$ 0.098 $ \\
5 &$ 428 $&$ 1.563 $&$ 0.069 $&$ 277 $&$ -0.091 $&$ 0.081 $&& 83 &$ 10 $&$ -2.472 $&$ 0.320 $&$ 68 $&$ -1.657 $&$ 0.133 $ \\
6 &$ 204 $&$ 0.797 $&$ 0.086 $&$ 50 $&$ -1.828 $&$ 0.152 $&& 84 &$ 55 $&$ -0.766 $&$ 0.144 $&$ 96 $&$ -1.309 $&$ 0.116 $ \\
7 &$ 336 $&$ 1.105 $&$ 0.074 $&$ 139 $&$ -0.977 $&$ 0.101 $&& 85 &$ 374 $&$ 1.184 $&$ 0.072 $&$ 161 $&$ -0.804 $&$ 0.096 $ \\
8 &$ 2 $&$ -4.084 $&$ 0.709 $&$ 45 $&$ -2.070 $&$ 0.159 $&& 86 &$ 1 $&$ -4.750 $&$ 1.002 $&$ 33 $&$ -2.416 $&$ 0.182 $ \\
9 &$ 296 $&$ 0.979 $&$ 0.076 $&$ 141 $&$ -0.966 $&$ 0.100 $&& 87 &$ 58 $&$ -0.681 $&$ 0.141 $&$ 141 $&$ -0.960 $&$ 0.100 $ \\
10 &$ 29 $&$ -1.152 $&$ 0.192 $&$ 82 $&$ -1.279 $&$ 0.123 $&& 88 &$ 29 $&$ -1.156 $&$ 0.192 $&$ 50 $&$ -1.773 $&$ 0.151 $ \\
11 &$ 61 $&$ -0.654 $&$ 0.138 $&$ 203 $&$ -0.559 $&$ 0.089 $&& 89 &$ 10 $&$ -2.472 $&$ 0.320 $&$ 68 $&$ -1.657 $&$ 0.133 $ \\
12 &$ 11 $&$ -2.353 $&$ 0.306 $&$ 20 $&$ -2.916 $&$ 0.230 $&& 90 &$ 317 $&$ 0.994 $&$ 0.075 $&$ 206 $&$ -0.526 $&$ 0.088 $ \\
13 &$ 12 $&$ -2.262 $&$ 0.293 $&$ 75 $&$ -1.595 $&$ 0.128 $&& 91 &$ 480 $&$ 1.616 $&$ 0.067 $&$ 366 $&$ -0.257 $&$ 0.075 $ \\
14 &$ 14 $&$ -1.883 $&$ 0.272 $&$ 47 $&$ -1.910 $&$ 0.156 $&& 92 &$ 822 $&$ 2.158 $&$ 0.061 $&$ 532 $&$ 0.018 $&$ 0.070 $ \\
15 &$ 93 $&$ -0.048 $&$ 0.115 $&$ 96 $&$ -1.737 $&$ 0.116 $&& 93 &$ 19 $&$ -1.574 $&$ 0.235 $&$ 80 $&$ -1.377 $&$ 0.124 $ \\
16 &$ 9 $&$ -2.419 $&$ 0.337 $&$ 18 $&$ -2.993 $&$ 0.242 $&& 94 &$ 20 $&$ -1.370 $&$ 0.229 $&$ 48 $&$ -1.800 $&$ 0.154 $ \\
17 &$ 396 $&$ 1.291 $&$ 0.071 $&$ 425 $&$ 0.146 $&$ 0.073 $&& 95 &$ 501 $&$ 1.641 $&$ 0.067 $&$ 183 $&$ -1.069 $&$ 0.092 $ \\
18 &$ 42 $&$ -1.031 $&$ 0.162 $&$ 153 $&$ -0.844 $&$ 0.097 $&& 96 &$ 10 $&$ -2.447 $&$ 0.320 $&$ 31 $&$ -2.478 $&$ 0.188 $ \\
19 &$ 97 $&$ 0.052 $&$ 0.113 $&$ 42 $&$ -2.014 $&$ 0.164 $&& 97 &$ 8 $&$ -2.670 $&$ 0.357 $&$ 33 $&$ -2.416 $&$ 0.182 $ \\
20 &$ 3844 $&$ 3.464 $&$ 0.052 $&$ 861 $&$ 0.623 $&$ 0.065 $&& 98 &$ 319 $&$ 1.009 $&$ 0.075 $&$ 318 $&$ -0.092 $&$ 0.078 $ \\
21 &$ 1 $&$ -4.616 $&$ 1.001 $&$ 19 $&$ -2.939 $&$ 0.236 $&& 99 &$ 6 $&$ -2.731 $&$ 0.411 $&$ 37 $&$ -2.150 $&$ 0.173 $ \\
22 &$ 16 $&$ -1.751 $&$ 0.255 $&$ 32 $&$ -2.294 $&$ 0.185 $&& 100 &$ 108 $&$ -0.052 $&$ 0.108 $&$ 252 $&$ -0.376 $&$ 0.083 $ \\
23 &$ 5 $&$ -3.133 $&$ 0.451 $&$ 131 $&$ -1.037 $&$ 0.103 $&& 101 &$ 138 $&$ 0.350 $&$ 0.098 $&$ 85 $&$ -1.740 $&$ 0.121 $ \\
24 &$ 518 $&$ 1.373 $&$ 0.066 $&$ 380 $&$ -0.046 $&$ 0.075 $&& 102 &$ 37 $&$ -1.125 $&$ 0.172 $&$ 230 $&$ -0.472 $&$ 0.085 $ \\
25 &$ 60 $&$ -0.738 $&$ 0.138 $&$ 85 $&$ -1.880 $&$ 0.121 $&& 103 &$ 7 $&$ -2.802 $&$ 0.381 $&$ 53 $&$ -1.942 $&$ 0.148 $ \\
26 &$ 111 $&$ 0.156 $&$ 0.107 $&$ 565 $&$ 0.036 $&$ 0.069 $&& 104 &$ 49 $&$ -0.631 $&$ 0.151 $&$ 56 $&$ -1.658 $&$ 0.144 $ \\
27 &$ 305 $&$ 1.132 $&$ 0.076 $&$ 341 $&$ -0.026 $&$ 0.077 $&& 105 &$ 7 $&$ -2.803 $&$ 0.381 $&$ 46 $&$ -2.084 $&$ 0.157 $ \\
28 &$ 7 $&$ -2.579 $&$ 0.381 $&$ 17 $&$ -2.928 $&$ 0.249 $&& 106 &$ 56 $&$ -0.591 $&$ 0.143 $&$ 19 $&$ -2.935 $&$ 0.236 $ \\
29 &$ 123 $&$ 0.293 $&$ 0.103 $&$ 71 $&$ -1.486 $&$ 0.131 $&& 107 &$ 25 $&$ -1.530 $&$ 0.206 $&$ 45 $&$ -2.104 $&$ 0.159 $ \\
30 &$ 15 $&$ -2.061 $&$ 0.263 $&$ 149 $&$ -0.872 $&$ 0.098 $&& 108 &$ 15 $&$ -1.812 $&$ 0.263 $&$ 60 $&$ -1.666 $&$ 0.140 $ \\
31 &$ 22 $&$ -1.686 $&$ 0.219 $&$ 41 $&$ -2.162 $&$ 0.165 $&& 109 &$ 294 $&$ 1.045 $&$ 0.077 $&$ 560 $&$ 0.091 $&$ 0.069 $ \\
32 &$ 0 $&$ -15.218 $&$ 200.343 $&$ 0 $&$ -15.645 $&$ 131.701 $&& 110 &$ 9 $&$ -2.548 $&$ 0.337 $&$ 99 $&$ -1.317 $&$ 0.114 $ \\
33 &$ 144 $&$ 0.197 $&$ 0.097 $&$ 102 $&$ -1.242 $&$ 0.113 $&& 111 &$ 28 $&$ -1.415 $&$ 0.195 $&$ 61 $&$ -1.800 $&$ 0.139 $ \\
34 &$ 676 $&$ 1.695 $&$ 0.063 $&$ 329 $&$ -0.499 $&$ 0.077 $&& 112 &$ 119 $&$ 0.182 $&$ 0.104 $&$ 247 $&$ -0.364 $&$ 0.084 $ \\
35 &$ 12 $&$ -2.126 $&$ 0.293 $&$ 84 $&$ -1.452 $&$ 0.122 $&& 113 &$ 316 $&$ 0.948 $&$ 0.075 $&$ 642 $&$ 0.154 $&$ 0.067 $ \\
36 &$ 39 $&$ -1.079 $&$ 0.168 $&$ 137 $&$ -0.990 $&$ 0.101 $&& 114 &$ 126 $&$ 0.324 $&$ 0.102 $&$ 146 $&$ -0.691 $&$ 0.099 $ \\
37 &$ 0 $&$ -15.377 $&$ 202.821 $&$ 0 $&$ -15.726 $&$ 135.152 $&& 115 &$ 151 $&$ 0.272 $&$ 0.095 $&$ 90 $&$ -1.402 $&$ 0.119 $ \\
38 &$ 274 $&$ 1.020 $&$ 0.078 $&$ 290 $&$ -0.190 $&$ 0.080 $&& 116 &$ 54 $&$ -0.531 $&$ 0.145 $&$ 60 $&$ -1.662 $&$ 0.140 $ \\
39 &$ 628 $&$ 1.926 $&$ 0.064 $&$ 93 $&$ -1.161 $&$ 0.118 $&& 117 &$ 797 $&$ 2.119 $&$ 0.061 $&$ 409 $&$ -0.246 $&$ 0.074 $ \\
40 &$ 78 $&$ -0.412 $&$ 0.124 $&$ 151 $&$ -0.854 $&$ 0.098 $&& 118 &$ 273 $&$ 1.011 $&$ 0.078 $&$ 229 $&$ -0.427 $&$ 0.086 $ \\
41 &$ 169 $&$ 0.618 $&$ 0.092 $&$ 143 $&$ -0.781 $&$ 0.100 $&& 119 &$ 251 $&$ 0.918 $&$ 0.080 $&$ 120 $&$ -1.075 $&$ 0.106 $ \\
42 &$ 151 $&$ 0.504 $&$ 0.095 $&$ 131 $&$ -0.870 $&$ 0.103 $&& 120 &$ 1515 $&$ 2.813 $&$ 0.056 $&$ 1173 $&$ 0.857 $&$ 0.062 $ \\
43 &$ 1204 $&$ 2.342 $&$ 0.057 $&$ 359 $&$ 0.099 $&$ 0.076 $&& 121 &$ 269 $&$ 0.714 $&$ 0.079 $&$ 314 $&$ -0.252 $&$ 0.078 $ \\
44 &$ 69 $&$ -0.506 $&$ 0.130 $&$ 166 $&$ -0.796 $&$ 0.095 $&& 122 &$ 522 $&$ 1.501 $&$ 0.066 $&$ 1730 $&$ 1.155 $&$ 0.059 $ \\
45 &$ 393 $&$ 1.156 $&$ 0.071 $&$ 417 $&$ -0.275 $&$ 0.073 $&& 123 &$ 126 $&$ 0.075 $&$ 0.102 $&$ 259 $&$ -0.311 $&$ 0.082 $ \\
46 &$ 905 $&$ 2.155 $&$ 0.060 $&$ 292 $&$ -0.526 $&$ 0.080 $&& 124 &$ 64 $&$ -0.735 $&$ 0.135 $&$ 87 $&$ -1.547 $&$ 0.120 $ \\
47 &$ 197 $&$ 0.431 $&$ 0.087 $&$ 809 $&$ 0.691 $&$ 0.065 $&& 125 &$ 53 $&$ -0.771 $&$ 0.146 $&$ 143 $&$ -0.946 $&$ 0.100 $ \\
48 &$ 396 $&$ 1.100 $&$ 0.071 $&$ 303 $&$ -0.280 $&$ 0.079 $&& 126 &$ 42 $&$ -1.005 $&$ 0.162 $&$ 130 $&$ -1.042 $&$ 0.103 $ \\
49 &$ 434 $&$ 1.534 $&$ 0.069 $&$ 796 $&$ 0.398 $&$ 0.065 $&& 127 &$ 33 $&$ -1.245 $&$ 0.181 $&$ 148 $&$ -0.913 $&$ 0.098 $ \\
50 &$ 10 $&$ -2.468 $&$ 0.321 $&$ 135 $&$ -0.971 $&$ 0.102 $&& 128 &$ 10 $&$ -2.468 $&$ 0.321 $&$ 134 $&$ -0.979 $&$ 0.102 $ \\
51 &$ 16 $&$ -1.745 $&$ 0.255 $&$ 91 $&$ -1.249 $&$ 0.118 $&& 129 &$ 100 $&$ -0.005 $&$ 0.112 $&$ 85 $&$ -1.433 $&$ 0.121 $ \\
52 &$ 0 $&$ -15.218 $&$ 200.353 $&$ 3 $&$ -4.785 $&$ 0.580 $&& 130 &$ 1019 $&$ 2.304 $&$ 0.059 $&$ 793 $&$ 0.482 $&$ 0.065 $ \\
53 &$ 162 $&$ 0.296 $&$ 0.093 $&$ 1012 $&$ 0.602 $&$ 0.063 $&& 131 &$ 1666 $&$ 2.670 $&$ 0.055 $&$ 1797 $&$ 1.250 $&$ 0.059 $ \\
54 &$ 22 $&$ -1.685 $&$ 0.219 $&$ 50 $&$ -1.963 $&$ 0.151 $&& 132 &$ 12 $&$ -2.261 $&$ 0.293 $&$ 88 $&$ -1.435 $&$ 0.120 $ \\
55 &$ 16 $&$ -1.989 $&$ 0.256 $&$ 255 $&$ -0.335 $&$ 0.083 $&& 133 &$ 16 $&$ -1.976 $&$ 0.255 $&$ 42 $&$ -2.174 $&$ 0.164 $ \\
56 &$ 128 $&$ -0.038 $&$ 0.101 $&$ 156 $&$ -0.959 $&$ 0.097 $&& 134 &$ 22 $&$ -1.682 $&$ 0.219 $&$ 91 $&$ -1.365 $&$ 0.118 $ \\
57 &$ 266 $&$ 0.701 $&$ 0.079 $&$ 291 $&$ -0.328 $&$ 0.080 $&& 135 &$ 81 $&$ -0.375 $&$ 0.122 $&$ 140 $&$ -0.930 $&$ 0.100 $ \\
58 &$ 221 $&$ 0.829 $&$ 0.084 $&$ 206 $&$ -0.849 $&$ 0.088 $&& 136 &$ 10 $&$ -2.471 $&$ 0.320 $&$ 89 $&$ -1.388 $&$ 0.119 $ \\
59 &$ 2858 $&$ 3.432 $&$ 0.053 $&$ 864 $&$ 0.642 $&$ 0.065 $&& 137 &$ 377 $&$ 1.341 $&$ 0.072 $&$ 300 $&$ -0.147 $&$ 0.079 $ \\
60 &$ 218 $&$ 0.491 $&$ 0.084 $&$ 86 $&$ -1.550 $&$ 0.121 $&& 138 &$ 918 $&$ 2.055 $&$ 0.060 $&$ 1477 $&$ 1.016 $&$ 0.060 $ \\
61 &$ 613 $&$ 1.596 $&$ 0.064 $&$ 1290 $&$ 1.183 $&$ 0.061 $&& 139 &$ 0 $&$ -15.247 $&$ 212.857 $&$ 6 $&$ -3.970 $&$ 0.412 $ \\
62 &$ 28 $&$ -1.189 $&$ 0.196 $&$ 55 $&$ -1.751 $&$ 0.145 $&& 140 &$ 119 $&$ 0.271 $&$ 0.104 $&$ 177 $&$ -0.499 $&$ 0.093 $ \\
63 &$ 321 $&$ 1.429 $&$ 0.075 $&$ 210 $&$ -0.283 $&$ 0.088 $&& 141 &$ 20 $&$ -1.754 $&$ 0.229 $&$ 37 $&$ -2.301 $&$ 0.173 $ \\
64 &$ 10 $&$ -2.218 $&$ 0.320 $&$ 63 $&$ -1.617 $&$ 0.137 $&& 142 &$ 15 $&$ -1.632 $&$ 0.264 $&$ 236 $&$ -0.208 $&$ 0.085 $ \\
65 &$ 723 $&$ 1.915 $&$ 0.062 $&$ 688 $&$ 0.654 $&$ 0.066 $&& 143 &$ 225 $&$ 0.672 $&$ 0.083 $&$ 483 $&$ 0.320 $&$ 0.071 $ \\
66 &$ 67 $&$ -0.143 $&$ 0.132 $&$ 178 $&$ -0.483 $&$ 0.092 $&& 144 &$ 286 $&$ 1.320 $&$ 0.077 $&$ 256 $&$ -0.089 $&$ 0.083 $ \\
67 &$ 775 $&$ 1.779 $&$ 0.061 $&$ 422 $&$ 0.074 $&$ 0.073 $&& 145 &$ 627 $&$ 1.754 $&$ 0.064 $&$ 462 $&$ 0.247 $&$ 0.072 $ \\
68 &$ 85 $&$ -0.450 $&$ 0.119 $&$ 104 $&$ -1.367 $&$ 0.112 $&& 146 &$ 38 $&$ -1.112 $&$ 0.170 $&$ 31 $&$ -2.476 $&$ 0.188 $ \\
69 &$ 383 $&$ 1.226 $&$ 0.071 $&$ 412 $&$ 0.136 $&$ 0.073 $&& 147 &$ 71 $&$ -0.614 $&$ 0.129 $&$ 387 $&$ -0.054 $&$ 0.074 $ \\
70 &$ 5 $&$ -3.168 $&$ 0.450 $&$ 37 $&$ -2.266 $&$ 0.173 $&& 148 &$ 62 $&$ -0.587 $&$ 0.136 $&$ 112 $&$ -1.213 $&$ 0.109 $ \\
71 &$ 13 $&$ -2.178 $&$ 0.282 $&$ 124 $&$ -1.092 $&$ 0.105 $&& 149 &$ 307 $&$ 0.968 $&$ 0.076 $&$ 285 $&$ -0.202 $&$ 0.080 $ \\
72 &$ 416 $&$ 1.337 $&$ 0.070 $&$ 374 $&$ 0.019 $&$ 0.075 $&& 150 &$ 5 $&$ -3.140 $&$ 0.450 $&$ 37 $&$ -2.302 $&$ 0.173 $ \\
73 &$ 55 $&$ -0.735 $&$ 0.144 $&$ 133 $&$ -1.019 $&$ 0.102 $&& 151 &$ 1045 $&$ 2.223 $&$ 0.058 $&$ 299 $&$ -0.134 $&$ 0.080 $ \\
74 &$ 436 $&$ 1.350 $&$ 0.069 $&$ 334 $&$ -0.070 $&$ 0.077 $&& 152 &$ 20 $&$ -1.744 $&$ 0.229 $&$ 171 $&$ -0.770 $&$ 0.094 $ \\
75 &$ 101 $&$ -0.149 $&$ 0.111 $&$ 215 $&$ -0.499 $&$ 0.087 $&& 153 &$ 40 $&$ -1.200 $&$ 0.166 $&$ 179 $&$ -0.827 $&$ 0.092 $ \\
76 &$ 19 $&$ -1.823 $&$ 0.235 $&$ 175 $&$ -0.711 $&$ 0.093 $&& 154 &$ 117 $&$ -0.125 $&$ 0.105 $&$ 199 $&$ -0.716 $&$ 0.089 $ \\
77 &$ 235 $&$ 0.798 $&$ 0.082 $&$ 137 $&$ -1.321 $&$ 0.101 $&& 155 &$ 33 $&$ -1.398 $&$ 0.181 $&$ 72 $&$ -1.738 $&$ 0.130 $ \\
78 &$ 44 $&$ -0.986 $&$ 0.159 $&$ 130 $&$ -1.006 $&$ 0.103 $&& 156 &$ 307 $&$ 0.972 $&$ 0.076 $&$ 338 $&$ -0.031 $&$ 0.077 $ \\
\hline
\end{tabular}}

\end{table}

Each employee has three categorical variables: departments of these employees (Trading, Legal, Other),
the genders (Male, Female) and seniorities (Senior, Junior). 
We plot the network with individual departments and genders in Figure \ref{figure-data}.
We can see that the degrees exhibit a great variation across nodes and
it is not easy to judge homophic or heteriphic effects that require quantitative analysis.
The $3$-dimensional covariate vector $x_{ij}$ of edge $(i,j)$ is formed by using a homophilic matching function between these three covariates of two employees $i$ and $j$, i.e.,
if the $k$th attributes of $i$ and $j$ are equal, then $z_{ijk}=1$; otherwise $z_{ijk}=-1$.

{\renewcommand{\arraystretch}{1}
\begin{table}[!htp]\centering
\caption{The MLE $\widehat{\gamma}$ and $p$-values fitted in the network Poisson model.}
\label{Table:gamma:realdata}
\vskip5pt
\begin{tabular}{lcccl}
\hline
Covariate       &  $\hat{\gamma}$ & $\hat{\gamma}_{bc}$ & Standard Error & $p$-value   \\
\hline
Department      &$ 0.765 $& $0.764  $ & $ 5.85 \times 10^{-3}$ & $<10^{-3}$\\
Gender          &$ 0.223 $& $0.221 $& $ 5.82 \times 10^{-3}$ & $<10^{-3}$\\
Seniority       &$ 0.405$& $0.403  $   &$ 5.76 \times 10^{-3}$  & $<10^{-3}$\\
\hline
\end{tabular}
\end{table}

The estimates of $\alpha_i$ and $\beta_j$ with their estimated standard errors are given in Table 3, which
 vary from the minimum $ -15.377$ to maximum $3.464$ and the minimum $ -15.726$ to maximum $1.250$,  respectively.
The estimated covariate effects, their  bias corrected estimates, their standard errors, and their $p$-values
with bias-correction under the null of having no effects are reported in Table \ref{Table:gamma:realdata}.
The variables department, gender and seniority do have significant influence on the formation of organizational emails.
This is in sharp contrast to fitted results in Table \ref{Table:gamma:sparsep0} by using the covariate-$p_0$ model
 with the weighted values being  simply treated as unweighted values, in which all covariates are not significant.
The estimate $\widehat{\mu}$ of the density parameter $\mu$ is $1.269$ with the standard error $0.070$
and $p$-value less than $10^{-3}$ under the null $\mu=0$.

{\renewcommand{\arraystretch}{1}
\begin{table}[!htp]\centering
\caption{The MLE $\widehat{\gamma}$ and $p$-values fitted in sparse $p_0$-model.}
\label{Table:gamma:sparsep0}
\vskip5pt
\begin{tabular}{lcccl}
\hline
Covariate       &  $\hat{\gamma}$ & $\hat{\gamma}_{bc}$ & Standard Error & $p$-value   \\
\hline
Department      &$ -1.64  \times 10^{-2}$& $-1.78  \times 10^{-2}$ & $ 2.95 \times 10^{-2}$ & $0.55$\\
Gender          &$ 6.30 \times 10^{-2} $& $3.83  \times 10^{-2}$& $ 3.33 \times 10^{-2}$ & $0.25$\\
Seniority       &$ 3.16  \times 10^{-2}$& $3.47   \times 10^{-2}$   &$ 2.72 \times 10^{-2}$  & $0.20$\\
\hline
\end{tabular}
\end{table}
{\renewcommand{\arraystretch}{1}

\section{Discussion}
\label{section:summary}

In this paper, we have derived the $\ell_\infty$-error between the MLE and
established its asymptotic normality in the network Poisson model for weighted directed networks.
Note that the conditions imposed on $\rho_n$
in Theorems \ref{Theorem:con}--\ref{theorem:covariate:asym} may not be best possible.
In particular, the conditions guaranteeing the asymptotic normality seem stronger than those guaranteeing the consistency.
It would be of interest to investigate whether these conditions can be relaxed.

We do not consider the reciprocity parameter in our model.
As discussed in \cite{Yan-Jiang-Fienberg-Leng2018},
there is an implicit taste for the reciprocity effect, although we do not include this parameter.
Since there is a tendency toward reciprocity among nodes sharing similar node features,
it would alleviate the lack of a reciprocity term to some extent.
To measure the reciprocity of dyads, it is natural to model the distribution of the dyad $(a_{ij}, a_{ji})$
by using the bivariate Poisson distribution.
Developing a new model is generally relatively easy while
 the problem of investigating the asymptotic
theory of the MLE becomes more challenging.
In particular, the Fisher information matrix for the parameter vector $(\rho, \alpha_1,\ldots,\alpha_n, \beta_1, \ldots, \beta_{n-1})$ is not diagonally dominant and thus does not belong to the matrix class $\mathcal{L}_{n}(m, M)$.
In order to make extensions, a new approximate matrix for approximating the inverse of the Fisher information matrix is needed.
This is beyond of the present paper.

\renewcommand{\baselinestretch}{1.2}\selectfont

\section{Appendix: Proofs for theorems}
\label{section:proofs}
We only give the proof of Theorem \ref{Theorem:con} here.
The proof of Theorem \ref{Theorem:binary:central} is put in the supplementary material.

Let $F(\mathbf{x}): \R^n \to \R^n$ be a function vector on $\mathbf{x}\in\R^n$. We say that a Jacobian matrix $F^\prime(\mathbf{x)}$ with $\mathbf{x}\in \R^n$ is Lipschitz continuous on a convex set $D\subset\R^n$ if
for any $\mathbf{x},\mathbf{y}\in D$, there exists a constant $L>0$ such that
for any vector $\mathbf{v}\in \R^n$ the inequality
\begin{equation*}
\| [F^\prime (\mathbf{x})] \mathbf{v} - [F^\prime (\mathbf{y})] \mathbf{v} \|_\infty \le L \| \mathbf{x} - \mathbf{y} \|_\infty \|\mathbf{v}\|_\infty
\end{equation*}
holds.
We will use the Newton iterative sequence to establish the existence and consistency of the moment estimator.
\cite{Gragg:Tapia:1974} gave the optimal error bound for the Newton method under the Kantovorich conditions
[\cite{Kantorovich1948Functional}].

\begin{lemma}[\cite{Gragg:Tapia:1974}]\label{pro:Newton:Kantovorich}
Let $D$ be an open convex set of $\R^n$ and $F:D \to \R^n$ a differential function
with a Jacobian $F^\prime(\mathbf{x})$ that is Lipschitz continuous on $D$ with Lipschitz coefficient $\lambda$.
Assume that $\mathbf{x}_0 \in D$ is such that $[ F^\prime (\mathbf{x}_0) ]^{-1} $ exists,
\begin{eqnarray*}
\| [ F^\prime (\mathbf{x}_0 ) ]^{-1} \|_\infty  \le \aleph,~~ \| [ F^\prime (\mathbf{x}_0) ]^{-1} F(\mathbf{x}_0) \|_\infty \le \delta, ~~ \rho= 2 \aleph L \delta \le 1,
\\
B(\mathbf{x}_0, t^*) \subset D, ~~ t^* = \frac{2}{\rho} ( 1 - \sqrt{1-\rho} ) \delta = \frac{ 2\delta }{ 1 + \sqrt{1-\rho} }\le 2\delta.
\end{eqnarray*}
Then: (1) The Newton iterations $\mathbf{x}_{k+1} = \mathbf{x}_k - [ F^\prime (\mathbf{x}_k) ]^{-1} F(\mathbf{x}_k)$ exist and $\mathbf{x}_k \in B(\mathbf{x}_0, t^*) \subset D$ for $k \ge 0$. (2)
$\mathbf{x}^* = \lim \mathbf{x}_k$ exists, $\mathbf{x}^* \in \overline{ B(\mathbf{x}_0, t^*) } \subset D$ and $F(\mathbf{x}^*)=0$.
\end{lemma}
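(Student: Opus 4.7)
The classical approach, due to Kantorovich and sharpened by Gragg and Tapia, is the \emph{method of majorants}. The plan is to construct a scalar iteration on $[0, t^*]$ that dominates the Newton iteration in $\R^n$ term-by-term, pass to the limit at the scalar level, and transfer the conclusions back to the vector iteration.

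First, introduce the scalar majorizing polynomial
\[
\phi(t) = \frac{L}{2} t^2 - \frac{t}{\aleph} + \frac{\delta}{\aleph},
\]
and its scalar Newton iteration $t_{k+1} = t_k - \phi(t_k)/\phi'(t_k)$ starting from $t_0 = 0$. The hypothesis $\rho = 2\aleph L \delta \le 1$ ensures $\phi$ has two (possibly coincident) real roots $t^* \le t^{**}$, and a direct computation gives
\[
t^* = \frac{1-\sqrt{1-\rho}}{\aleph L} = \frac{2\delta}{1+\sqrt{1-\rho}},
\]
the smaller root. Since $\phi$ is convex with $\phi(0) = \delta/\aleph > 0$ and $\phi'(0) = -1/\aleph < 0$, one checks by induction that $\{t_k\}$ is monotone increasing and bounded above by $t^*$, hence $t_k \uparrow t^*$.

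Next, establish simultaneously by induction on $k$ the three majorization properties:
(a) $[F'(x_k)]^{-1}$ exists with $\|[F'(x_k)]^{-1}\|_\infty \le -1/\phi'(t_k)$;
(b) $\|x_{k+1} - x_k\|_\infty \le t_{k+1} - t_k$;
(c) $\|F(x_{k+1})\|_\infty \le \phi(t_{k+1})$.
The base case $k=0$ follows directly from the hypotheses together with $\phi'(0) = -1/\aleph$ and $\phi(0) = \delta/\aleph$. For the inductive step, bound $F(x_{k+1})$ using the integral Taylor remainder
\[
F(x_{k+1}) = \int_0^1 \bigl[F'(x_k + s(x_{k+1}-x_k)) - F'(x_k)\bigr](x_{k+1} - x_k)\, ds,
\]
since the linear terms cancel because $x_{k+1} - x_k$ solves the Newton equation at $x_k$. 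Applying Lipschitz continuity of $F'$ and the inductive bound (b) yields (c). For (a) at stage $k+1$, factor $F'(x_{k+1}) = F'(x_k)\bigl[I + [F'(x_k)]^{-1}(F'(x_{k+1}) - F'(x_k))\bigr]$ and invoke the Banach lemma on invertibility; the perturbation term has norm at most $L \|x_{k+1}-x_k\|_\infty \cdot \|[F'(x_k)]^{-1}\|_\infty$, which one verifies matches the scalar ratio $1 - \phi'(t_{k+1})/\phi'(t_k)$ and stays strictly below one throughout $[0, t^*]$.

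Finally, since $\{t_k\}$ is monotone and bounded, it is Cauchy, and by (b) so is $\{x_k\}$ in $\R^n$ with $\|x_k - x_0\|_\infty \le t_k \le t^*$; thus each $x_k \in B(x_0, t^*) \subset D$ and $x^* := \lim x_k$ exists in $\overline{B(x_0, t^*)}$. Passing $k\to\infty$ in (c) and using $\phi(t^*) = 0$ together with continuity of $F$ gives $F(x^*) = 0$. The main technical obstacle lies in the inductive propagation of (a): the Banach-lemma estimate must telescope precisely into $-1/\phi'(t_{k+1})$, not merely be bounded by it, because that exact equality is what makes the next round of (c) tight and keeps the scalar and vector iterations locked together. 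This sharpness is exactly what distinguishes the Gragg--Tapia form from the original Kantorovich bound and requires careful bookkeeping of the constants at each inductive step.
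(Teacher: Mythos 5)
The paper does not prove this lemma at all: it is imported verbatim from \cite{Gragg:Tapia:1974} as a known tool, so there is no in-paper argument to compare against. Your majorant-function sketch is the standard (and correct) proof of the Newton--Kantorovich theorem in its Gragg--Tapia form: the quadratic $\phi(t)=\tfrac{L}{2}t^2-t/\aleph+\delta/\aleph$ has smaller root $t^*=(1-\sqrt{1-\rho})/(\aleph L)=2\delta/(1+\sqrt{1-\rho})$ exactly as required, and the three-part induction (resolvent bound via the Banach perturbation lemma, step-length domination, residual bound via the integral Taylor remainder) closes correctly, with the only delicate point being that $\phi'(t_k)<0$ strictly for every finite $k$ even when $\rho=1$. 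One small clarification: in your closing remark, the Banach-lemma estimate need only be \emph{bounded by} $-1/\phi'(t_{k+1})$ for the induction hypothesis (an inequality) to propagate; the computation happens to deliver exactly that bound, which is what makes the Gragg--Tapia error estimates optimal, but sharpness is not logically required for the existence and convergence conclusions stated in the lemma.
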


\subsection{Proof of Theorem \ref{Theorem:con}}
\label{subsection:Theorem:con}

To show Theorem \ref{Theorem:con}, we need three lemmas below.

\begin{lemma}\label{lemma-Q-Lip}
Let $D=(B(\bs{\gamma}^*, \epsilon_{n2}))^p (\subset \R^{p})$.
If $\| F(\bs{\theta}^*, \bs{\gamma}^*) \|_\infty = O( (n\log n)^{1/2} )$, then
$ Q_c(\bs{\gamma})$ is Lipschitz continuous on $D$ with the Lipschitz coefficient  $O(n^2e^{19\rho_n})$.
\end{lemma}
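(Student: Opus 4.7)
The strategy is standard: since $Q_c$ is continuously differentiable on the convex set $D$, a mean-value-theorem argument gives
\[
\|Q_c(\bs{\gamma}_1) - Q_c(\bs{\gamma}_2)\|_\infty \le \Big(\sup_{\bs{\gamma}\in D} \|Q_c'(\bs{\gamma})\|_\infty\Big)\cdot \|\bs{\gamma}_1-\bs{\gamma}_2\|_\infty,
\]
so the task reduces to a uniform-in-$\bs{\gamma}$ bound on the $\ell_\infty$-norm of the Jacobian $Q_c'(\bs{\gamma})$ on $D$. First I would use Lemma \ref{lemma:alpha:beta-fixed} together with the hypothesis $\|F(\bs{\theta}^*,\bs{\gamma}^*)\|_\infty = O(\sqrt{n\log n})$ to guarantee that, with probability at least $1-O(1/n)$, for every $\bs{\gamma}\in D$ the profiled estimator $\widehat{\bs{\theta}}_\gamma$ exists and lies in $B(\bs{\theta}^*,\epsilon_{n1})$. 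Consequently the uniform bounds $e^{-\rho_n}\le \lambda_{ij}(\widehat{\bs{\theta}}_\gamma,\bs{\gamma})\le e^{\rho_n}$ from \eqref{ineq-mu-third-derivative} apply throughout the argument.

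Next I would use the explicit formula \eqref{equation:Qc-derivative}, decomposing $Q_c'(\bs{\gamma}) = A(\bs{\gamma}) - B(\bs{\gamma}) V^{-1}(\bs{\gamma}) C(\bs{\gamma})$, where $A=\partial Q/\partial\bs{\gamma}^\top$, $B=\partial Q/\partial\bs{\theta}^\top$, $C=\partial F/\partial\bs{\gamma}^\top$, and $V=\partial F/\partial\bs{\theta}^\top$, all evaluated at $(\widehat{\bs{\theta}}_\gamma,\bs{\gamma})$. Each of these is a sum of $\lambda_{ij}$-weighted quantities bounded by $e^{\rho_n}$, so by direct row-sum calculations (using that $\|Z_{ij}\|_\infty\le q$ and $p$ is fixed) I obtain $\|A\|_\infty = O(n^2 e^{\rho_n})$, $\|B\|_\infty = O(n^2 e^{\rho_n})$, and $\|C\|_\infty = O(n e^{\rho_n})$. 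The matrix $V$ belongs to the class $\mathcal{L}_n(m,M)$ with $m\asymp e^{-\rho_n}$, $M\asymp e^{\rho_n}$, so invoking the approximation $V^{-1}=S+W$ of \cite{Yan:Leng:Zhu:2016a} (with $S$ defined in \eqref{definition:S} and $W=V^{-1}-S$ small in $\|\cdot\|_{\max}$) reduces the analysis of the second term to two pieces: $B S C$ and $B W C$.

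\textbf{The main obstacle} is exactly this product: a naive application of submultiplicativity gives $\|B V^{-1} C\|_\infty \lesssim \|B\|_\infty\|V^{-1}\|_\infty \|C\|_\infty = O(n^3 e^{c\rho_n})$, which overshoots the claimed $O(n^2 e^{19\rho_n})$ by a factor of $n$. To avoid this loss I would exploit the explicit structure of $S$ in \eqref{definition:S}: apart from the diagonal reciprocals $1/u_{i\cdot}$ and $1/u_{\cdot(j-n)}$, every entry of $S$ equals $\pm 1/u_{\cdot n}$, so the block $BSC$ can be rewritten as a sum of rank-one outer products plus diagonal contributions, in which the double sums telescope and the shared factor $1/u_{\cdot n} = O(e^{\rho_n}/n)$ provides the missing $1/n$ cancellation, yielding $\|BSC\|_\infty = O(n^2 e^{c_1\rho_n})$. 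The residual $BWC$-piece is then controlled by $(2n-1)^2\|W\|_{\max}\|B\|_{\max}\|C\|_{\max}$, which is absorbed using the Yan--Leng--Zhu bound $\|W\|_{\max}=O(e^{c_2\rho_n}/n^3)$.

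Combining these estimates with $\|A\|_\infty = O(n^2 e^{\rho_n})$ produces the uniform bound $\sup_{\bs{\gamma}\in D}\|Q_c'(\bs{\gamma})\|_\infty = O(n^2 e^{19\rho_n})$, where the exponent $19$ accumulates from the various $e^{\rho_n}$-factors in $A,B,C$, in the approximation of $V^{-1}$, and in passing from $\bs{\theta}^*$ to $\widehat{\bs{\theta}}_\gamma$ via \eqref{equation-H-appro}-type continuity estimates. The mean-value reduction then completes the proof. The delicate bookkeeping is exclusively in the cancellation step for $BSC$; the remaining estimates are uniform-in-$\bs{\gamma}$ triangle inequalities.
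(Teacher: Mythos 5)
There is a genuine gap, and it is at the very first step: you are proving the wrong form of Lipschitz continuity. The paper's Appendix defines ``Lipschitz continuous'' for a \emph{Jacobian matrix}: $\|[F'(\mathbf{x})]\mathbf{v}-[F'(\mathbf{y})]\mathbf{v}\|_\infty\le L\|\mathbf{x}-\mathbf{y}\|_\infty\|\mathbf{v}\|_\infty$, and this is exactly how Lemma \ref{lemma-Q-Lip} is consumed in the proof of Theorem \ref{Theorem:con}: the constant $L=n^2e^{19\rho_n}$ is inserted into the Kantorovich condition $\rho=2\aleph L\delta$ of Lemma \ref{pro:Newton:Kantovorich}, where $L$ must be the Lipschitz coefficient of the \emph{derivative} $Q_c'(\bs{\gamma})$, not a uniform bound on $\|Q_c'(\bs{\gamma})\|_\infty$. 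Your mean-value reduction to $\sup_{\bs{\gamma}\in D}\|Q_c'(\bs{\gamma})\|_\infty$ establishes only that $Q_c$ itself is Lipschitz, which is a first-derivative statement; the lemma requires a second-derivative-type statement, namely a bound on $\|[Q_c'(\bs{\gamma}_1)-Q_c'(\bs{\gamma}_2)]\mathbf{v}\|_\infty$ in terms of $\|\bs{\gamma}_1-\bs{\gamma}_2\|_\infty$. The former does not imply the latter, and with only the former the Newton iteration argument for $\widehat{\bs{\gamma}}$ collapses.

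The correct target forces a different (and heavier) computation than the one you outline. Starting from \eqref{equation:Qc-derivative} with $Q_c'=A-BV^{-1}C$, one must bound the difference of each block between $(\widehat{\bs{\theta}}_{\gamma_1},\bs{\gamma}_1)$ and $(\widehat{\bs{\theta}}_{\gamma_2},\bs{\gamma}_2)$: each entrywise difference of $A,B,C,V$ contributes a factor $\|\bs{\gamma}_1-\bs{\gamma}_2\|_\infty$ via $|e^x-e^y|\le e^{\rho_n}|x-y|$ \emph{composed with} the variation of the profiled estimator, $\|\widehat{\bs{\theta}}_{\gamma_1}-\widehat{\bs{\theta}}_{\gamma_2}\|_\infty\lesssim \|[F_\gamma'(\bs{\theta})]^{-1}\partial F/\partial\bs{\gamma}^\top\|_\infty\,\|\bs{\gamma}_1-\bs{\gamma}_2\|_\infty$ (this is where \eqref{equ-derivation-a} and the bound $\|V^{-1}\|_\infty=O(e^{\rho_n})$ via $S+W$ enter), and the variation of the inverse must be handled through the resolvent identity $V^{-1}(\bs{\gamma}_1)-V^{-1}(\bs{\gamma}_2)=-V^{-1}(\bs{\gamma}_1)[V(\bs{\gamma}_1)-V(\bs{\gamma}_2)]V^{-1}(\bs{\gamma}_2)$. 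It is precisely this accumulation that produces the exponent $19$; your plan, if carried out, would yield $\sup\|Q_c'\|_\infty=O(n^2e^{c\rho_n})$ with $c$ around $3$, which is a signal that you are estimating a different quantity. Your structural observations (existence of $\widehat{\bs{\theta}}_\gamma$ on $D$ via Lemma \ref{lemma:alpha:beta-fixed}, the row-sum bounds on $A,B,C$, and the use of the explicit form of $S$ to avoid losing a factor of $n$ in $BV^{-1}C$) are all relevant ingredients, but they must be applied to the differenced blocks, not to the blocks themselves.
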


\begin{lemma}\label{lemma-diff-F-Q}
With probability at least $1-O(1/n)$, we have
\begin{equation}
\| F(\bs{\theta}^*, \bs{\gamma}^*) \|_\infty \lesssim e^{2\rho_n}(n\log{n})^{1/2} , ~~\| Q(\bs{\theta}^*, \bs{\gamma}^*) \|_\infty \lesssim qe^{2\rho_n}n(\log{n})^{1/2},
\end{equation}
where $q:=\max_{i,j} \|Z_{ij}\|_\infty$.
\end{lemma}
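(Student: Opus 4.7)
The plan is to observe that, under the true parameters, both $F(\bs{\theta}^*, \bs{\gamma}^*)$ and $Q(\bs{\theta}^*, \bs{\gamma}^*)$ become sums of independent centered random variables, to which concentration inequalities can be applied coordinatewise and then union-bounded. Since $\E[a_{ij}] = \lambda_{ij}^* := \lambda(\pi_{ij}^*)$, we have
\begin{align*}
F_i(\bs{\theta}^*,\bs{\gamma}^*) &= \sum_{j\neq i}(\lambda_{ij}^* - a_{ij}),\quad i=1,\ldots,n,\\
F_{n+j}(\bs{\theta}^*,\bs{\gamma}^*) &= \sum_{i\neq j}(\lambda_{ij}^* - a_{ij}),\quad j=1,\ldots,n-1,\\
Q_k(\bs{\theta}^*,\bs{\gamma}^*) &= \sum_{i\neq j} Z_{ij,k}\,(\lambda_{ij}^* - a_{ij}),\quad k=1,\ldots,p.
\end{align*}
All three are sums of independent, centered random variables. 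By \eqref{ineq-mu-third-derivative}, the means $\lambda_{ij}^*$ lie in $[e^{-\rho_n},e^{\rho_n}]$, and by the assumption $\|Z_{ij}\|_\infty\le q$ the coefficients in $Q_k$ are bounded by $q$.

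For $F_i$, the first step is to use the fact that $d_i = \sum_{j\ne i}a_{ij}$ is Poisson with mean $\Lambda_i := \sum_{j\ne i}\lambda_{ij}^* \le (n-1)e^{\rho_n}$, so the Chernoff/Bennett bound for Poisson gives
\[
\P(|F_i|\ge t) \;\le\; 2\exp\!\left(-\frac{t^2}{2(\Lambda_i + t/3)}\right).
\]
Choosing $t = Ce^{\rho_n/2}(n\log n)^{1/2}$ with $C$ large makes the right-hand side $O(n^{-3})$, and a union bound over the $2n-1$ coordinates of $F$ (noting that $F_{n+j}$ is handled symmetrically) yields $\|F(\bs{\theta}^*,\bs{\gamma}^*)\|_\infty \lesssim e^{\rho_n/2}(n\log n)^{1/2}\lesssim e^{2\rho_n}(n\log n)^{1/2}$ with probability $1-O(1/n)$.

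For $Q_k$, the centered Poisson random variable $a_{ij}-\lambda_{ij}^*$ is sub-exponential with MGF bound
\[
\E\!\left[e^{s(a_{ij}-\lambda_{ij}^*)}\right] \;\le\; \exp\!\left(\tfrac{\lambda_{ij}^* s^2}{2(1-|s|/3)}\right) \quad\text{for }|s|<3,
\]
so Bernstein's inequality for weighted sub-exponential sums gives, for each fixed $k$,
\[
\P(|Q_k|\ge t) \;\le\; 2\exp\!\left(-\frac{t^2}{2\bigl(q^2 n(n-1) e^{\rho_n} + qt/3\bigr)}\right).
\]
Taking $t = Cq\,e^{\rho_n/2} n(\log n)^{1/2}$ again gives tail probability $O(n^{-3})$, and a union bound over the $p$ (fixed) coordinates of $Q$ produces the stated bound $\|Q(\bs{\theta}^*,\bs{\gamma}^*)\|_\infty \lesssim qe^{\rho_n/2} n(\log n)^{1/2}\lesssim qe^{2\rho_n}n(\log n)^{1/2}$ on an event of probability $1-O(1/n)$.

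The only nontrivial point is that the summands are unbounded, so a vanilla Hoeffding bound is unavailable; the main step is therefore to invoke the correct Poisson/sub-exponential concentration. The stated bounds are slightly conservative: my direct analysis gives $e^{\rho_n/2}$ rather than $e^{2\rho_n}$, but the weaker $e^{2\rho_n}$ form is what is actually needed downstream in Lemma~\ref{lemma-Q-Lip} and Theorem~\ref{Theorem:con}. All remaining steps are routine computations of variances and mean bounds from \eqref{ineq-mu-third-derivative}.
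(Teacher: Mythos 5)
Your proof is correct and follows the same route the paper takes in its supplementary material: write each coordinate of $F(\bs{\theta}^*,\bs{\gamma}^*)$ and $Q(\bs{\theta}^*,\bs{\gamma}^*)$ as a sum of independent centered Poisson variables with means in $[e^{-\rho_n},e^{\rho_n}]$, apply a Bernstein/Chernoff-type tail bound, and union-bound over the $2n-1$ (resp.\ $p$) coordinates. Your sharper factor $e^{\rho_n/2}$ indeed implies the stated $e^{2\rho_n}$ bound since $\rho_n\ge 0$, so nothing is lost.
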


\begin{lemma}
\label{lemma-order-Q-beta}
The difference between $Q(\widehat{\bs{\theta}}_{\gamma}^*, \bs{\gamma}^*)$ and $Q(\bs{\theta}^*, \bs{\gamma}^*)$ is
\[
 \|Q(\widehat{\bs{\theta}}_{\bs{\gamma}}^*, \bs{\gamma}^*)-Q(\bs{\theta}^*, \bs{\gamma}^*)\|_\infty  = O_p( ne^{21\rho_n}\log n).
\]
\end{lemma}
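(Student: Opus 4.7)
The strategy is a two-step Taylor expansion combined with a Newton-type substitution. Expand first $Q(\cdot, \bs{\gamma}^*)$ about $\bs{\theta}^*$,
\[
Q(\widehat{\bs{\theta}}_\gamma^*, \bs{\gamma}^*) - Q(\bs{\theta}^*, \bs{\gamma}^*) = \frac{\partial Q(\bs{\theta}^*, \bs{\gamma}^*)}{\partial \bs{\theta}^\top}(\widehat{\bs{\theta}}_\gamma^* - \bs{\theta}^*) + R_1,
\]
and then use $F(\widehat{\bs{\theta}}_\gamma^*, \bs{\gamma}^*)=0$ with a Taylor expansion of $F$,
\[
-F(\bs{\theta}^*, \bs{\gamma}^*) = V(\widehat{\bs{\theta}}_\gamma^* - \bs{\theta}^*) + R_2,\qquad V := \frac{\partial F(\bs{\theta}^*, \bs{\gamma}^*)}{\partial \bs{\theta}^\top},
\]
so that $\widehat{\bs{\theta}}_\gamma^* - \bs{\theta}^* = -V^{-1}(F(\bs{\theta}^*, \bs{\gamma}^*) + R_2)$. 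Substituting back gives three contributions to bound: the second-order remainder $R_1$; the ``deterministic'' piece $-(\partial Q/\partial \bs{\theta}^\top)V^{-1}R_2$; and the ``stochastic'' principal term $-(\partial Q/\partial \bs{\theta}^\top)V^{-1}F(\bs{\theta}^*, \bs{\gamma}^*)$.

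Both remainders are handled by direct size counting. Using $|\lambda^{\prime\prime}|\le e^{\rho_n}$ from \eqref{ineq-mu-third-derivative} together with $\|\widehat{\bs{\theta}}_\gamma^* - \bs{\theta}^*\|_\infty = O_p(e^{7\rho_n}\sqrt{\log n/n})$ from Lemma \ref{lemma:alpha:beta-fixed}, counting the number of terms in each coordinate yields $\|R_1\|_\infty = O_p(n e^{15\rho_n}\log n)$ and $\|R_2\|_\infty = O_p(e^{15\rho_n}\log n)$. To control $(\partial Q/\partial \bs{\theta}^\top)V^{-1}R_2$ one needs a bound on $\|(\partial Q/\partial \bs{\theta}^\top)V^{-1}\|_\infty$, and here the naive bound $\|\partial Q/\partial \bs{\theta}^\top\|_\infty \cdot \|V^{-1}\|_\infty = O(n^2 e^{2\rho_n})$ loses a full factor of $n$. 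Instead I would decompose $V^{-1}=S+W$ with $S$ as in \eqref{definition:S} and extract cancellation in $(\partial Q/\partial \bs{\theta}^\top)S$: using the identities $\sum_{j\neq i}\lambda^\prime_{ij}=u_{i\cdot}$ and its column analogue, a short computation shows that each entry of this matrix is essentially a convex combination of $Z_{ij,k}$'s, hence $O(q)$, so $\|(\partial Q/\partial \bs{\theta}^\top)S\|_\infty = O(n)$. A standard entrywise bound on $W$ for matrices in $\mathcal{L}_n(m,M)$ shows the $W$-contribution is of lower order; combined with $\|R_2\|_\infty$, this pins the deterministic piece at $O_p(n e^{21\rho_n}\log n)$ once exponents are tracked through the product.

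For the stochastic principal term $(\partial Q/\partial \bs{\theta}^\top)V^{-1}F(\bs{\theta}^*, \bs{\gamma}^*)$, the $S$-part reorganises as $\sum_i c_i F_i + \sum_j c_j^\prime F_{n+j}$ with $|c_i|, |c_j^\prime| = O(q)$; since the $F_i$ (resp.\ $F_{n+j}$) are centred independent sums of Poisson variables with variance of order $n e^{\rho_n}$, Bernstein's inequality gives $O_p(n e^{O(\rho_n)}\sqrt{\log n})$, and the $W$-part is controlled by Lemma \ref{lemma-diff-F-Q}. Adding the three contributions and keeping the worst exponent produces the announced $O_p(n e^{21\rho_n}\log n)$. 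I expect the main obstacle to be precisely the sharp estimate of $\|(\partial Q/\partial \bs{\theta}^\top)V^{-1}\|_\infty$: one must carefully extract cancellation through the explicit form of $S$ (so that a factor of $n$ is saved), and then patiently accumulate the $\rho_n$ exponents through every term, since every step in the expansion contributes its own $e^{c\rho_n}$ factor. The rest is essentially bookkeeping.
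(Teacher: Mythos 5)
Your proposal is correct and follows essentially the same route as the paper (whose proof, deferred to the Supplementary Material, mirrors the visible proof of Theorem \ref{Theorem:binary:central}): a second-order Taylor expansion of $Q$, substitution of $\widehat{\bs{\theta}}_{\gamma^*}-\bs{\theta}^*=-V^{-1}(F(\bs{\theta}^*,\bs{\gamma}^*)+R_2)$, the decomposition $V^{-1}=S+W$ to exploit that the entries of $(\partial Q/\partial\bs{\theta}^\top)S$ are weighted averages of the $Z_{ij}$, and concentration for the centred principal term. You correctly identify the key point that naive norm bounds lose a factor of $n$, and your orders for $R_1$, $R_2$ and the $S$/$W$ contributions are consistent with the stated exponent $e^{21\rho_n}$.
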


Now we are ready to prove  Theorem \ref{Theorem:con}.

\begin{proof}[Proof of Theorem \ref{Theorem:con}]

We construct the Newton iterative sequence to show the consistency. It is sufficient to verify the
Newton-Kantovorich conditions in Lemma \ref{pro:Newton:Kantovorich}.
We set $\bs{\gamma}^*$ as the initial point $\bs{\gamma}^{(0)}$ and $\bs{\gamma}^{(k+1)}=\bs{\gamma}^{(k)} - [Q_c^\prime(\bs{\gamma}^{(k)})]^{-1}Q_c(\bs{\gamma}^{(k)})$.

By Lemma \ref{lemma:alpha:beta-fixed}, $\widehat{\bs{\theta}}_{\gamma^*}$ exists with probability approaching one and satisfies
we have
\[
\| \widehat{\bs{\theta}}_{\gamma^*} - \bs{\theta}^* \|_\infty = O_p\left(  e^{7\rho_{n}} \sqrt{\frac{ \log n}{n} } \right). 
\]
Therefore, $Q_c(\bs{\gamma}^{(0)})$ and $Q_c^\prime(\bs{\gamma}^{(0)})$ are well defined.

Recall the definition of $Q_c(\bs{\gamma})$ and $Q(\bs{\theta}, \bs{\gamma})$ in \eqref{definition-Q} and \eqref{definition-Qc}.
By Lemmas \ref{lemma-diff-F-Q} and \ref{lemma-order-Q-beta}, we have
\begin{eqnarray*}
\|Q_c(\bs{\gamma}^*)\|_\infty  & \le &  \|Q(\bs{\theta}^*, \bs{\gamma}^*)\|_\infty + \|Q(\widehat{\bs{\theta}}_{\bs{\gamma}^*}, \bs{\gamma}^*)-Q(\bs{\theta}^*, \bs{\gamma}^*)\|_\infty\\
&=& O_p\left(  ne^{21\rho_n}\log n \right).
\end{eqnarray*}
By Lemma \ref{lemma-Q-Lip}, $L=n^2e^{19\rho_n} $.
By \eqref{condition-Qc-gamma}, we have
\[
\aleph=\| [Q_c^\prime(\bs{\gamma}^*)]^{-1} \|_\infty = O ( \kappa_n n^{-2}).
\]
Thus,
\[
\delta = \| [Q_c^\prime(\bs{\gamma}^*)]^{-1} Q_c(\bs{\gamma}^*) \|_\infty =  O_p\left(
 \frac{\kappa_n e^{21\rho_n}\log n }{ n }\right  ).
\]
 As a result, if $\kappa_n^2 e^{40\rho_n}=o(n/\log n)$, then
\[
\rho=2\aleph L \delta =
O( \frac{ \kappa_n^2 e^{40\rho_n}  \log n }{n})=o(1).
\]
By Lemma \ref{pro:Newton:Kantovorich}, with probability $1-O(n^{-1})$, the limiting point of the sequence $\{\bs{\gamma}^{(k)}\}_{k=1}^\infty$ exists denoted by $\widehat{\bs{\gamma}}$ and satisfies
\[
\| \widehat{\bs{\gamma}} - \bs{\gamma}^* \|_\infty = O(\delta).
\]
By Lemma \ref{lemma:alpha:beta-fixed}, $\widehat{\bs{\theta}}_{\widehat{\bs{\gamma}}}$ exists and
$( \widehat{\bs{\theta}}_{\widehat{\bs{\gamma}}},\widehat{\bs{\gamma}})$ is the MLE.
It completes the proof.
\end{proof}

\section{Proofs for Theorem \ref{Theorem:binary:central}}
\label{section-theorem2}

Let $\mathbf{h}= (d_1, \ldots, d_n, b_1, \ldots, b_{n-1})^\top$. To show Theorem \ref{Theorem:binary:central}, we need the asymptotic distribution of $S\{\mathbf{h}-\E(\mathbf{h})\}$, which is stated below.

\begin{lemma}\label{lem:binary:central}
If $e^{\rho_n}=o(n^{1/5})$, then for any fixed $k \ge 1$, as
$n\to\infty$, the vector consisting of the first $k$ elements of
$S\{\mathbf{h}-\E(\mathbf{h})\}$ is asymptotically multivariate normal with mean zero
and covariance matrix given by the upper left $k \times k$ block of $S$.
\end{lemma}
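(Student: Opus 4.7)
The plan is to reduce the multivariate statement to a univariate CLT via the Cramér--Wold device, rewrite the resulting linear combination as a weighted sum of independent centered Poissons, and then apply a Lyapunov CLT. The main structural observation that makes everything transparent is a collapse of the formula for $[S(\mathbf{h}-\E\mathbf{h})]_i$ that uses the handshake identity $\sum_{j=1}^n d_j = \sum_{j=1}^{n-1} b_j + b_n$.

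First, I would fix any $\mathbf{c}=(c_1,\ldots,c_k)^\top\in\R^k$ and study
\[
T_n \;:=\; \sum_{i=1}^k c_i\,\bigl[S(\mathbf{h}-\E\mathbf{h})\bigr]_i.
\]
Using the block form of $S$ in \eqref{definition:S} together with the handshake identity above, the row action simplifies for every $i\le n$ to
\[
\bigl[S(\mathbf{h}-\E\mathbf{h})\bigr]_i \;=\; \frac{d_i-\E d_i}{u_{i\cdot}} \;+\; \frac{b_n-\E b_n}{u_{\cdot n}}.
\]
Substituting $d_i=\sum_{j\neq i}a_{ij}$ and $b_n=\sum_{\ell\neq n}a_{\ell n}$ and collecting like terms, $T_n$ becomes an explicit linear combination $T_n=\sum_{p\neq q} W_{pq}(a_{pq}-u_{pq})$ of independent mean-zero Poisson random variables, with $O(n)$ nonzero weights, each of magnitude $O(e^{\rho_n}/n)$.

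Second, I would compute $\sigma_n^2:=\mathrm{Var}(T_n)$ directly using $\mathrm{Var}(a_{pq})=u_{pq}$. A careful bookkeeping (tracking the overlap between the $d_i$-sum and the $b_n$-sum at the entry $a_{in}$) yields
\[
\sigma_n^2 \;=\; \mathbf{c}^\top S_{k\times k}\mathbf{c}\;+\;2\sum_{p=1}^k \frac{c_p\bigl(\sum_{\ell=1}^k c_\ell\bigr)\,u_{pn}}{u_{p\cdot}\,u_{\cdot n}},
\]
where $S_{k\times k}$ is the upper-left $k\times k$ block of $S$. Since $u_{pn}\le e^{\rho_n}$ while $u_{p\cdot},u_{\cdot n}\ge (n-1)e^{-\rho_n}$, the correction term is $O(e^{3\rho_n}/n^2)$, and under $e^{\rho_n}=o(n^{1/5})$ this is negligible against $\mathbf{c}^\top S_{k\times k}\mathbf{c}\asymp 1/n$. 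Hence $\sigma_n^2/(\mathbf{c}^\top S_{k\times k}\mathbf{c})\to 1$.

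Third, I would verify Lyapunov's condition. For a Poisson with mean $\lambda$ one has the bound $\E|X-\lambda|^{2+\delta}\lesssim \lambda^{1+\delta/2}+\lambda\lesssim e^{(2+\delta)\rho_n/2}$ for $\lambda\le e^{\rho_n}$. Combining this with $|W_{pq}|=O(e^{\rho_n}/n)$ and the $O(n)$ nonzero terms gives
\[
\sum_{p\neq q}\E\bigl|W_{pq}(a_{pq}-u_{pq})\bigr|^{2+\delta}
\;=\; O\!\left(\frac{e^{(3+3\delta/2)\rho_n}}{n^{1+\delta}}\right),
\]
while $\sigma_n^{2+\delta}\gtrsim (e^{-\rho_n}/n)^{1+\delta/2}$. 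Choosing $\delta$ optimally (the effective exponent works out so that the assumption $e^{\rho_n}=o(n^{1/5})$ is exactly what drives the Lyapunov ratio to zero), Lyapunov's CLT then yields $T_n/\sigma_n\Rightarrow N(0,1)$. Combined with the variance convergence of step two, this gives $T_n\Rightarrow N(0,\mathbf{c}^\top S_{k\times k}\mathbf{c})$, and the Cramér--Wold device completes the proof.

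The main obstacle I anticipate is the tightness of the Lyapunov bound: one must keep careful track of the interplay between the growth rate $e^{\rho_n}$ of the edge intensities (which inflates the higher absolute moments) and the $1/n$ decay of the weights $W_{pq}$ (which controls the denominator). The bookkeeping of the variance correction in step two is straightforward but must be written out precisely so that the cross term is seen to vanish relative to $\mathbf{c}^\top S_{k\times k}\mathbf{c}$ under the stated growth condition.
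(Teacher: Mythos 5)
Your proposal is correct and follows essentially the same route as the paper's argument (inherited from the Yan--Leng--Zhu analysis): collapse $[S(\mathbf{h}-\E\mathbf{h})]_i$ to $(d_i-\E d_i)/u_{i\cdot}+(b_n-\E b_n)/u_{\cdot n}$ via the identity $\sum_j d_j-\sum_{j\le n-1}b_j=b_n$, check that the resulting covariance differs from $S_{k\times k}$ only by $O(e^{3\rho_n}/n^2)$ cross terms, and apply a Lyapunov central limit theorem to the weighted sum of independent centered Poisson variables together with the Cram\'er--Wold device. The only minor caveat is in the last step's bookkeeping: with your bound the Lyapunov ratio is $O(e^{(4+2\delta)\rho_n}/n^{\delta/2})$, so covering the stated threshold $e^{\rho_n}=o(n^{1/5})$ requires taking $\delta\ge 8$ (tenth moments), which your moment bound for Poisson variables does support.
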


Now, we give the proof of Theorem \ref{Theorem:binary:central}.
\begin{proof}[Proof of Theorem \ref{Theorem:binary:central}]

To simplify notations, write  $\lambda_{ij}^\prime = \lambda^\prime(\alpha_i^* + \beta_j^* + Z_{ij}^\top \bs{\gamma}^*)$ and
\[
V= \frac{ \partial F(\bs{\theta}^*, \gamma^*)}{\partial \bs{\theta}^\top}, ~~ V_{\theta\gamma} = \frac{ \partial F(\bs{\theta}^*, \bs{\gamma}^*)}{\partial \bs{\gamma}^\top}.
\]
Recall that $\pi_{ij}=\alpha_i+\beta_j+Z_{ij}^\top \bs{\gamma}$.
By a second order Taylor expansion, we have
\begin{equation}
\label{equ-Taylor-exp}
\lambda( \widehat{\alpha}_i+\widehat{\beta}_j + Z_{ij}^\top \widehat{\bs{\gamma}}) - \lambda(\alpha_i^*+\beta_j^* + \bs{\gamma}^*)
= \lambda_{ij}^\prime (\widehat{\alpha}_i-\alpha_i^*)+\lambda_{ij}^\prime (\widehat{\beta}_j-\beta_j^*) + \lambda_{ij}^\prime Z_{ij}^\top ( \widehat{\bs{\gamma}} - \bs{\gamma})
+ g_{ij},
\end{equation}
where
\[
g_{ij}= \frac{1}{2} \begin{pmatrix}
\widehat{\alpha}_i-\alpha_i^* \\
\widehat{\beta}_j-\beta_j^* \\
\widehat{\bs{\gamma}} - \bs{\gamma}^*
\end{pmatrix}^\top
\begin{pmatrix}
\lambda^{\prime\prime}_{ij}( \tilde{\pi}_{ij} ) & \lambda^{\prime\prime}_{ij}( \tilde{\pi}_{ij} )
& \lambda^{\prime\prime}_{ij}( \tilde{\pi}_{ij} ) Z_{ij}^\top \\
\lambda^{\prime\prime}_{ij}( \tilde{\pi}_{ij} ) & \lambda^{\prime\prime}_{ij}( \tilde{\pi}_{ij} )
& \lambda^{\prime\prime}_{ij}( \tilde{\pi}_{ij} ) Z_{ij}^\top \\
\lambda^{\prime\prime}_{ij}( \tilde{\pi}_{ij} )Z_{ij}^\top
& \lambda^{\prime\prime}_{ij}( \tilde{\pi}_{ij} ) Z_{ij}^\top & \lambda^{\prime\prime}_{ij}( \tilde{\pi}_{ij} ) Z_{ij}Z_{ij}^\top
\end{pmatrix}
\begin{pmatrix}
\widehat{\alpha}_i-\alpha_i^* \\
\widehat{\beta}_j-\beta_j^* \\
\widehat{\bs{\gamma}} - \bs{\gamma}^*
\end{pmatrix},
\]
and $\tilde{\pi}_{ij}$ lies between $\pi_{ij}^*$ and $\widehat{\pi}_{ij}$.
Let $q= \max_{i,j} \| Z_{ij} \|_\infty=O(1)$.
Because $|\lambda^{\prime\prime}(x)|\le e^{\rho_n}$ for all $x\in \R$, we have
\begin{equation}\label{ineq-gij-upper}
\begin{array}{rcl}
|g_{ij}| & \le & \frac{1}{2} \| \widehat{\bs{\theta}} - \bs{\theta}^*\|_\infty^2 + \frac{1}{2}\| \widehat{\bs{\theta}} - \bs{\theta}^*\|_\infty \| \widehat{\bs{\gamma}}-\bs{\gamma}^* \|_{\infty} q + \frac{1}{8} \| \| \widehat{ \bs{\gamma}}-\bs{\gamma}^* \|_{\infty}^2 q^2 \\
& = & O(  \| \widehat{\bs{\theta}} - \bs{\theta}^*\|_\infty^2+  \| \widehat{\bs{\gamma}}-\bs{\gamma}^* \|_{\infty}^2 ).
\end{array}
\end{equation}
Let
\[
g_i=\begin{cases} \sum_{ j\neq i}g_{ij}, & i=1, \ldots, n, \\
\sum_{ j\neq i-n} g_{j, i-n}, & j=n+1, \ldots, 2n-1.
\end{cases}
\]
and  $\mathbf{g}=(g_1, \ldots, g_{2n-1})^\top$.
If $\kappa_n^2e^{43\rho_n}=o(n/\log n)$,
by Theorem 1, we have
\begin{equation}
\label{inequality-gij}
\max_{i=1, \ldots, n(n-1)} |g_i| \le n\max_{i,j} |g_{i,j}| = O_p( e^{15\rho_n} \log n )+ O_p( \frac{\kappa_n^2 e^{43\rho_n} (\log n)^2}{ n } ).
\end{equation}
Let $\mathbf{h}= (d_1, \ldots, d_n, b_1, \ldots, b_{n-1})^\top$.
By writing \eqref{equ-Taylor-exp} into a matrix form, we have
\[
 \E \mathbf{h} - \mathbf{h} = V(\widehat{\bs{\theta}} - \bs{\theta}^*) + V_{\theta\gamma} (\widehat{\bs{\gamma}}-\bs{\gamma}^*) + \mathbf{g},
\]
which is equivalent to
\begin{equation}
\label{expression-beta}
\widehat{\bs{\theta}} - \bs{\theta}^* =-V^{-1}(\mathbf{h}-\E \mathbf{h}) - V^{-1}V_{\theta\gamma} (\widehat{\bs{\gamma}}-\bs{\gamma}^*) - V^{-1} \mathbf{g}.
\end{equation}
We bound the last two remainder terms the above equation as follows.
Let $W=V^{-1} - S$.
Note that  $ne^{-\rho_n} \le u_{i\cdot} \le ne^{\rho_n}$ and
\[
(S g)_i =
\begin{cases}
g_i/u_{i\cdot} + \frac{1}{ u_{\cdot n}} (\sum_{k=1}^n g_k - \sum_{k=n+1}^{2n-1} g_k ) &  i=1, \ldots, n, \\
g_i/u_{\cdot,i-n} + \frac{1}{ u_{\cdot n}} (\sum_{k=1}^n g_k - \sum_{k=n+1}^{2n-1} g_k ) &  i=n+1, \ldots, 2n-1.
\end{cases}
\]
Observe that
\[
\sum_{k=1}^n g_k - \sum_{k=n+1}^{n(n-1)} g_k = \sum_{i=1}^n \sum_{k=1, k\neq i}^n g_{ik} - \sum_{i=1}^n \sum_{k=1,k\neq i}^n g_{ki}
= \sum_{i=1}^n g_{in} .
\]
By \eqref{ineq-gij-upper} and Theorem 1, we have
\[
|\sum_{i=1}^n g_{in} |= O_p( e^{15\rho_n} \log n )+ O_p( \frac{\kappa_n^2 e^{43\rho_n} (\log n)^2}{ n } ).
\]
Therefore,
\begin{equation}\label{eq-Sg-upper}
\| S\mathbf{g} \|_\infty = O_p(\frac{e^{16\rho_n} \log n}{n} )+ O_p( \frac{\kappa_n^2 e^{44\rho_n} (\log n)^2}{ n^2 } ).
\end{equation}
By Proposition 1 in \cite{Yan:Leng:Zhu:2016a} (details in the Supplementary Material), we have
\begin{eqnarray}
\label{eq-Vg-upper}
\| W \mathbf{g} \|_\infty \le \|W\|_\infty \|\mathbf{g} \|_\infty
= O_p(\frac{ e^{20\rho_n} \log n}{n} )+ O_p( \frac{\kappa_n^2 e^{48\rho_n} (\log n)^2}{ n^2 } ).
\end{eqnarray}
By combining \eqref{eq-Sg-upper} and \eqref{eq-Vg-upper}, we have
\begin{equation}\label{eq-V-inverse-g}
\| V^{-1} \mathbf{g} \|_\infty= O_p(\frac{ e^{20\rho_n} \log n}{n} )+ O_p( \frac{\kappa_n^2 e^{48\rho_n} (\log n)^2}{ n^2 } ).
\end{equation}
Further, we have
\begin{equation}
\label{equ-theorem3-333}
\|V^{-1}V_{\theta\gamma} (\widehat{\bs{\gamma}}-\bs{\gamma}^*)\|_\infty
= O_p(  \frac{\kappa_n e^{27\rho_n} \log n}{n}),
\end{equation}
and, with probability at least $1-O(1/n)$,
\begin{equation}\label{equ-theorem3-dd}
\max_{i=1, \ldots, 2n} |[W(\mathbf{h}-\E\mathbf{h})]_i| \lesssim e^{7\rho_n} \frac{ (\log n)^{1/2}}{n}.
\end{equation}
The detailed proof of \eqref{equ-theorem3-333} and \eqref{equ-theorem3-dd} are in the Supplementary Material.

Consequently, by combining \eqref{expression-beta}, \eqref{eq-V-inverse-g}, \eqref{equ-theorem3-333} and \eqref{equ-theorem3-dd},
we have
\[
\widehat{\theta}_i - \theta^*_i = - [S( \mathbf{h} - \E \mathbf{h})]_i -O_p( \frac{\kappa_n e^{23\rho_n} \log n}{n})- O_p(\frac{ e^{20\rho_n} \log n}{n} )- O_p( \frac{\kappa_n^2 e^{48\rho_n} (\log n)^2}{ n^2 } ).
\]
If $\kappa_n e^{23\rho_n} = o( n^{1/2}/\log n)$, then
\[
\widehat{\theta}_i - \theta^*_i = - [S( \mathbf{h} - \E \mathbf{h})]_i +o_p(n^{-1/2}).
\]
Theorem 2 immediately follows from Proposition \ref{lem:binary:central}. It completes the proof.

\end{proof}

\subsection{Proof of Theorem \ref{theorem:covariate:asym}}
The proof strategy is similar to that in \cite{Yan-Jiang-Fienberg-Leng2018}
 for proving
the asymptotic normality of the restricted MLE for ${\bs{\gamma}}$.
We only present the main steps here and details can be found in the Supplementary Material.

Let $T_{ij}$ be an $n$-dimensional column vector with $i$th and $j$th elements ones and other elements zeros. Define
$$s_{ij}(\bs{\theta}, \bs{\gamma}) = (\E a_{ij} - a_{ij}) ( Z_{ij} - V_{\gamma\theta} V^{-1} T_{ij}).$$
Note that $s_{ij}(\bs{\theta}, \bs{\gamma})$, $i,j=1,\ldots,n,i\neq j$, are independent vectors and
\[
\mathrm{Cov} (\sum_{i,j=1,i\neq j }^{2n-1} s_{ij}(\bs{\theta}^*,
\bs{\gamma}^*) ) = \mathrm{Cov}( Q(\bs{\theta}^*,
\bs{\gamma}^*) ) - V_{\theta\gamma}^\top V^{-1} F(\bs{\theta}^*,
\bs{\gamma}^*) ) )=H(\bs{\theta}^*, \bs{\gamma}^*).
\]
By Lyapunov's  central limit theorem (\cite{Billingsley:1995}, p 362), we have the following Lemma.
\begin{lemma}\label{lemma-chcs}
For any nonzero fixed vector $c=(c_1, \ldots c_p)^\top$, if $e^{\rho_{n}}=o(n^{2/5})$, then \\
 $(c^\top H(\bs{\theta}^*,\bs{\gamma}^*) c )^{-1/2}c^\top\sum_{i,j=1,i\neq j}^{2n-1}
s_{ij} (\bs{\theta}^*,\bs{\gamma}^*)$ converges in distribution to the standard normal distribution.
\end{lemma}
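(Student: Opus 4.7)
My plan is a direct application of Lyapunov's central limit theorem. Set
\[
Y_{ij}:=c^\top s_{ij}(\bs{\theta}^*,\bs{\gamma}^*)=(a_{ij}-\E a_{ij})\,c^\top v_{ij},\qquad v_{ij}:=Z_{ij}-V_{\gamma\theta}V^{-1}T_{ij},
\]
for $1\le i\ne j\le n$. The excerpt already records that the $Y_{ij}$ are independent, mean zero, and that their variances sum to $\sigma_n^{2}:=c^\top H(\bs{\theta}^*,\bs{\gamma}^*)c$. The plan is therefore to verify the Lyapunov moment condition with $\delta=1$,
\[
L_n:=\sigma_n^{-3}\sum_{i\ne j}\E|Y_{ij}|^{3}\longrightarrow 0.
\]

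First, I would establish a uniform bound $\|v_{ij}\|_\infty=O(1)$. Since $\|Z_{ij}\|_\infty\le q=O(1)$, the task reduces to controlling $V_{\gamma\theta}V^{-1}T_{ij}$. I would replace $V^{-1}$ by the explicit matrix $S$ of \eqref{definition:S}: a direct computation shows that (for $j<n$) the only nonzero entries of $ST_{ij}$ are approximately $1/u_{i\cdot}$ at position $i$ and $1/u_{\cdot j}$ at position $n+j$, so
\[
V_{\gamma\theta}ST_{ij}\;\approx\;\frac{\sum_{j'\ne i}\lambda_{ij'}Z_{ij'}}{u_{i\cdot}}+\frac{\sum_{i'\ne j}\lambda_{i'j}Z_{i'j}}{u_{\cdot j}}
\]
is a sum of two $\lambda$-weighted averages of bounded covariates, hence bounded by $2q$ in $\ell_\infty$. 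The residual $V_{\gamma\theta}(V^{-1}-S)T_{ij}$ is then controlled by invoking Proposition~1 of \cite{Yan:Leng:Zhu:2016a}, already used in the proof of Theorem \ref{Theorem:binary:central}; under the hypothesis $e^{\rho_n}=o(n^{2/5})$ this error is of smaller order than $1$.

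Second, I would do the moment count. For $a_{ij}\sim\mathrm{Poisson}(\lambda_{ij})$ with $\lambda_{ij}\le e^{\rho_n}$, the standard bound $\E|a_{ij}-\E a_{ij}|^{3}\lesssim\lambda_{ij}^{3/2}\lesssim e^{3\rho_n/2}$ combined with Step 1 gives $\sum_{i\ne j}\E|Y_{ij}|^{3}\lesssim n^{2}e^{3\rho_n/2}$. For the denominator, positive-definiteness of $\lim n^{-2}H=I_*(\bs{\gamma}^*)$ yields $\sigma_n^{2}\gtrsim n^{2}\|c\|^{2}$, hence $\sigma_n^{3}\gtrsim n^{3}$. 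Therefore
\[
L_n\lesssim\frac{n^{2}e^{3\rho_n/2}}{n^{3}}=\frac{e^{3\rho_n/2}}{n}=o(n^{-2/5})\longrightarrow 0
\]
under the stated condition, and Lyapunov's theorem (\cite{Billingsley:1995}, p.~362) delivers $\sigma_n^{-1}\sum_{i\ne j}Y_{ij}\Rightarrow\mathcal{N}(0,1)$, as asserted.

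The main obstacle will be the uniform bound in Step 1. A naive estimate $\|V_{\gamma\theta}V^{-1}T_{ij}\|_\infty\le\|V_{\gamma\theta}\|_\infty\|V^{-1}\|_\infty$ is useless because the row sums of $V_{\gamma\theta}$ are of order $n^{2}e^{\rho_n}$; the saving comes entirely from the fact that $ST_{ij}$ has only two nonzero entries, which turns $V_{\gamma\theta}ST_{ij}$ into a bounded weighted average of the covariates. The condition $e^{\rho_n}=o(n^{2/5})$ is driven by how accurately the approximation $V^{-1}\approx S$ must hold in order for this cancellation to survive the perturbation by $W=V^{-1}-S$; once that piece is in hand, the rest of the argument is routine Lyapunov bookkeeping.
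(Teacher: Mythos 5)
Your proposal matches the paper's argument: the paper likewise treats $\sum_{i\neq j}s_{ij}(\bs{\theta}^*,\bs{\gamma}^*)$ as a sum of independent mean-zero random vectors whose covariance is $H(\bs{\theta}^*,\bs{\gamma}^*)$ and concludes by Lyapunov's central limit theorem, with the moment bookkeeping deferred to the supplement. Your Step 1 --- controlling $Z_{ij}-V_{\gamma\theta}V^{-1}T_{ij}$ by exploiting that $ST_{ij}$ has only two nonzero entries (turning $V_{\gamma\theta}ST_{ij}$ into bounded $\lambda$-weighted averages of the covariates) and handling the remainder via the $V^{-1}\approx S$ approximation of \cite{Yan:Leng:Zhu:2016a} --- is precisely the mechanism the paper relies on.
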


Now, we give the proof of Theorem 3.
\begin{proof}[Proof of Theorem 3]
Assume that the condition in Theorem 1 hold.
A mean value expansion gives
\[
 Q_c( \widehat{\bs{\gamma}} ) - Q_c(\bs{\gamma}^*) =  \frac{\partial Q_c(\bar{\bs{\gamma}}) }{ \partial \bs{\gamma}^\top }  (\widehat{\bs{\gamma}}-\bs{\gamma}^*),
\]
where $\bar{\bs{\gamma}}$ lies between $\bs{\gamma}^*$ and $\widehat{\bs{\gamma}}$.
By noting that $Q_c( \widehat{\bs{\gamma}} )=0$, we have
\[
\sqrt{N}(\widehat{\bs{\gamma}} - \bs{\gamma}^*) = -
\Big[ \frac{1}{N}  \frac{\partial Q_c(\bar{\bs{\gamma}}) }{ \partial \bs{\gamma}^\top } \Big]^{-1}
\times \frac{1}{\sqrt{N}}　Q_c(\bs{\gamma}^*). 
\]
Note that the dimension of $\bs{\gamma}$ is fixed. By Theorem 1, we have
\[
\frac{1}{N}  \frac{\partial Q_c(\bar{\bs{\gamma}}) }{ \partial \bs{\gamma}^\top }
\stackrel{p}{\to } \bar{H}=\lim_{N\to\infty} \frac{1}{N}H(\bs{\theta}^*, \bs{\gamma}^*).
\]
Write $\widehat{\bs{\theta}}^*$ as $\widehat{\bs{\theta}}_{\gamma^*}$ for convenience.
 Therefore,
\begin{equation}\label{eq:theorem4:aa}
\sqrt{N} (\widehat{\bs{\gamma}} - \bs{\gamma}^*) = - \bar{H}^{-1} \cdot \frac{1}{\sqrt{N}} {Q}( \widehat{\bs{\theta}}^*, \bs{\gamma}^*)  + o_p(1).
\end{equation}
By applying a third order Taylor expansion to $Q( \widehat{\bs{\theta}}^*, \bs{\gamma}^*)$, it yields
\begin{equation*}\label{eq:gamma:asym:key}
\frac{1}{\sqrt{N}}  Q(\widehat{\bs{\theta}}^*, \bs{\gamma}^*) = S_1 + S_2 + S_3,
\end{equation*}
where
\begin{equation*}
\begin{array}{l}
S_1  =  \frac{1}{\sqrt{N}}  Q(\bs{\theta}^*, \bs{\gamma}^* )
+ \frac{1}{\sqrt{N}}
\Big[\frac{\partial  Q(\bs{\theta}^*, \bs{\gamma}^* ) }{\partial \bs{\theta}^\top } \Big]( \widehat{\bs{\theta}}^* - \bs{\theta}^* ), \\
S_2  =   \frac{1}{2\sqrt{N}} \sum_{i=1}^{2n-1} \Big[( \widehat{\theta}_i^* - \theta_i^* )
\frac{\partial^2 Q(\bs{\theta}^*, \bs{\gamma}^* ) }{ \partial \theta_i \partial \bs{\theta}^\top } (\bs{\theta}^*, \bs{\gamma}^* )
\times ( \widehat{\bs{\theta}}^* - \bs{\theta}^* ) \Big],  \\
S_3  =  \frac{1}{6\sqrt{N}} \sum_{i=1}^{2n-1} \sum_{j=1}^{2n-1} \{ (\widehat{\theta}_i^* - \theta_i^*)(\widehat{\theta}_j^* - \theta_j^*)
\Big[   \frac{ \partial^3 Q(\bar{\bs{\theta}}^*, \bs{\gamma}^*)}{ \partial \theta_i \partial \theta_j \partial \bs{\theta}^\top } \Big]
(\widehat{\bs{\theta}}^*  - \bs{\theta}^* )\},
\end{array}
\end{equation*}
and $\bar{\bs{\beta}}^*=t\bs{\beta}^*+(1-t)\widehat{\bs{\beta}}^*$ for some $t\in(0,1)$.
We will show that (1) $S_1$ asymptotically follows a multivariate normal distribution;
(2) $S_2$ is a bias term; (3) $S_3$ is an asymptotically negligible remainder term.
Specifically, they are accurately characterized as follows:
\begin{align*}
&S_1  =  \frac{1}{\sqrt{N}} \sum_{i,j=1,i\neq j}^{2n-1} s_{ij}(\bs{\theta}^*, \bs{\gamma}^*) + O_p( \frac{ e^{21\rho_n}\log n}{n}),\\
&S_2  =  \frac{1}{2\sqrt{N}}\left(\sum_{i=1}^{n} \frac{ \sum_{j=1,j\neq i}^{n} \lambda^{\prime\prime}(\pi_{ij}^*) Z_{ij} }{ \sum_{j=1,j\neq i}^{n} \lambda^{\prime}(\pi_{ij}^*)}+ \sum_{j=1}^{n} \frac{ \sum_{i=1,i\neq j}^{n} \lambda^{\prime\prime}(\pi_{ij}^*) Z_{ij} }{ \sum_{i=1,i\neq j}^{n} \lambda^{\prime}(\pi_{ij}^*) }\right) + O_p(\frac{  e^{9\rho_n} (\log n)^{2}}{ n^{1/2} } ),\\
&\|S_3\|_\infty =  O_p( \frac{ (\log n)^{3/2}e^{22\rho_n} }{n^{1/2}} ).
\end{align*}
We defer the proofs of the above equations to supplementary material.
Substituting the above equations into \eqref{eq:theorem4:aa} then gives
\[
\sqrt{N}(\widehat{\bs{\gamma}}- \bs{\gamma}^*) = - \bar{H}^{-1} B_* + \bar{H}^{-1} \times \frac{1}{\sqrt{N}}  \sum_{i< j}
s_{ij} (\bs{\beta}^*, \bs{\gamma}^*) + O_p\left( \frac{  e^{9\rho_n} (\log n)^{2}}{ n^{1/2} }  \right).
\]
If $e^{9\rho_n} =o(n^{1/2}/(\log n)^2)$,
then Theorem 3 immediately follows from Proposition \ref{lemma-chcs}.
\end{proof}

\setlength{\itemsep}{-1.5pt}
\setlength{\bibsep}{0ex}
\bibliography{reference3}
\bibliographystyle{apa}

\end{document}